
\documentclass{amsart}

\usepackage{amsmath,amssymb,amsfonts,enumerate,amsthm,graphicx,color}

\setcounter{page}{1}

\setlength{\textheight}{21.56cm}

\setlength{\textwidth}{13.67cm}

\setlength{\oddsidemargin}{1cm}

\setlength{\evensidemargin}{1cm}


%

%
%

%
%

%
\def\opn#1#2{\def#1{\operatorname{#2}}} 
%
\opn\chara{char} \opn\length{\ell} \opn\pd{pd} \opn\rk{rk}
\opn\projdim{proj\,dim} \opn\injdim{inj\,dim} \opn\rank{rank}
\opn\depth{depth} \opn\grade{grade} \opn\height{height}
\opn\embdim{emb\,dim} \opn\codim{codim}

\opn\Tr{Tr} \opn\bigrank{big\,rank}
\opn\superheight{superheight}\opn\lcm{lcm}
\opn\trdeg{tr\,deg}%
\opn\reg{reg} \opn\lreg{lreg} \opn\skel{skel}
\opn\multideg{multideg}
%
\opn\div{div} \opn\Div{Div} \opn\cl{cl} \opn\Cl{Cl}
%
%
\opn\Spec{Spec} \opn\Supp{Supp} \opn\supp{supp} \opn\Sing{Sing}
\opn\Ass{Ass}
%
%
\opn\Ann{Ann} \opn\Rad{Rad} \opn\Soc{Soc}
%
%
\opn\Ker{Ker} \opn\Coker{Coker} \opn\Im{Im} \opn\Hom{Hom}
\opn\Tor{Tor} \opn\Ext{Ext} \opn\End{End} \opn\Aut{Aut}
\opn\id{id}

\opn\nat{nat}
\opn\pff{pf}
\opn\Pf{Pf} \opn\GL{GL} \opn\SL{SL} \opn\mod{mod} \opn\ord{ord}
%
%
\opn\aff{aff} \opn\con{conv} \opn\relint{relint} \opn\st{st}
\opn\lk{lk} \opn\cn{cn} \opn\core{core} \opn\vol{vol}
\opn\link{link} \opn\star{star} \opn\skel{skel} \opn\Reg{Reg}
\opn\gr{gr}

%
%

\def\pot#1#2{#1[\kern-0.28ex[#2]\kern-0.28ex]}

%
%
\opn\dirlim{\underrightarrow{\lim}}
\opn\inivlim{\underleftarrow{\lim}}
%
%
%

%
%

\def\Implies{\ifmmode\Longrightarrow \else
     \unskip${}\Longrightarrow{}$\ignorespaces\fi}
\def\implies{\ifmmode\Rightarrow \else
     \unskip${}\Rightarrow{}$\ignorespaces\fi}
\def\iff{\ifmmode\Longleftrightarrow \else
     \unskip${}\Longleftrightarrow{}$\ignorespaces\fi}

\let\:=\colon

\newtheorem{thm}{Theorem}[section]
\newtheorem{cor}[thm]{Corollary}
\newtheorem{lem}[thm]{Lemma}
\newtheorem{prop}[thm]{Proposition}

\newtheorem{rem}[thm]{Remark}

\numberwithin{equation}{section}

\begin{document}
\bibliographystyle{amsplain}

\title{ The edge ideals of complete multipartite hypergraphs }
\author{ Dariush Kiani and Sara Saeedi Madani }
\thanks{2010 \textit{Mathematics Subject Classification.} 05E45, 13F55, 05C65, 13H10.}
\thanks{\textit{Key words and phrases.} Complete $s$-uniform $t$-partite hypergraphs, Edge ideals, (Sequentially) Cohen-Macaulay,
Vertex decomposable. }

\address{Dariush Kiani, Department of Pure Mathematics,
 Faculty of Mathematics and Computer Science,
 Amirkabir University of Technology (Tehran Polytechnic),
424, Hafez Ave., Tehran 15914, Iran, and School of Mathematics, Institute for Research in Fundamental Sciences (IPM),
P.O. Box 19395-5746, Tehran, Iran.} \email{dkiani@aut.ac.ir, dkiani7@gmail.com}
\address{Sara Saeedi Madani, Department of Pure Mathematics,
 Faculty of Mathematics and Computer Science, Amirkabir University of Technology (Tehran Polytechnic),
424, Hafez Ave., Tehran 15914, Iran, and School of Mathematics, Institute for Research in Fundamental Sciences (IPM),
P.O. Box 19395-5746, Tehran, Iran.} \email{sarasaeedi@aut.ac.ir}

\begin{abstract}
We classify all complete uniform multipartite hypergraphs with respect to some algebraic properties, such as being (almost) complete intersection, Gorenstein, level, $l$-Cohen-Macaulay, $l$-Buchsbaum, unmixed, and satisfying Serre's condition $S_r$, via some combinatorial terms. Also, we prove that for a complete $s$-uniform $t$-partite hypergraph $\mathcal{H}$, vertex decomposability, shellability, sequentially $S_r$ and sequentially Cohen-Macaulay properties coincide with the condition that $\mathcal{H}$ has $t-1$ sides consisting of a single vertex. Moreover, we show that the latter condition occurs if and only if it is a chordal hypergraph.
\end{abstract}

\maketitle

\section{ Introduction } \label{introduction}

\noindent In recent years, many authors have focused on studying different kinds of monomial ideals associated to combinatorial objects, such as Stanley-Reisner ideals, facet ideals, edge ideals and path ideals (see for example \cite{HTYZ},\cite{MKM}, \cite{MK}, \cite{SKT}, \cite{W1} and \cite{W}). In this paper, we study edge ideals of  hypergraphs.

A \textbf{hypergraph} $\mathcal{H}$ with finite vertex set $V(\mathcal{H})$ is a family of nonempty subsets of $V(\mathcal{H})$ whose union is $V(\mathcal{H})$, called edges. The set of vertices and edges of $\mathcal{H}$ are denoted
by $V(\mathcal{H})$ and $E(\mathcal{H})$, respectively. Sometimes, we also use $\mathcal{H}$ as its set of edges. An \textbf{induced subhypergraph} of $\mathcal{H}$, over $S\subseteq V(\mathcal{H})$, is defined as ${\mathcal{H}}_S=\{e\in \mathcal{H}~:~e\subseteq S\}$. If all the edges of a hypergraph $\mathcal{H}$ have the same cardinality $t$, then it is said that $\mathcal{H}$ is $t$-\textbf{uniform} (also sometimes referred as a
$t$-graph). We call an edge of cardinality one, an \textbf{isolated} vertex. If none of the edges of $\mathcal{H}$ is included in another, then $\mathcal{H}$ is called a \textbf{simple} hypergraph. Throughout this paper, we mean by a hypergraph, a simple one.

In this paper, we focus on a class of uniform hypergraphs called $t$-partite which is a generalization of multipartite graphs.
These hypergraphs are important from the combinatorial point of view. The organization of this paper is as follows. In Section~\ref{Preliminaries}, we bring some definitions and general properties of hypergraphs and simplicial complexes, and also some relations between combinatorial objects and algebraic ones. For this purpose, we mostly use \cite{Be} and \cite{HH}. Moreover, we
introduce the class of multipartite hypergraphs, and in the case of complete multipartite hypergraphs, we pose some of their basic properties, which will be used in the other sections. In Section~\ref{Algebraic}, we investigate about some algebraic properties of complete multipartite hypergraphs, like being (almost) complete intersection, Gorenstein, level, $l$-Cohen-Macaulay, $l$-Buchsbaum, unmixed, and satisfying Serre's condition $S_r$, via studying their independent complexes. For a complete $s$-uniform $t$-partite hypergraph $\mathcal{H}$, we show that level, Cohen-Macaulay and $S_r$ properties are equivalent to the condition that all sides of $\mathcal{H}$ have just one vertex, which happens just in the case that the independent complex of $\mathcal{H}$ is a matroid. On the other hand, we show that the independent complex of a complete $s$-uniform $t$-partite hypergraph is a matroid if and only if it is a tight complex. Moreover, we show that when $s>2$, Buchsbaumness is also equivalent to them. Furthermore, we prove that being complete intersection and Gorenstein coincide for complete uniform multipartite hypergraphs. We also characterize all complete uniform multipartite hypergraphs whose edge ideals are $l$-Cohen-Macaulay (resp. $l$-Buchsbaum). Moreover, we determine the shape of those complete multipartite hypergraphs whose edge ideals are almost complete intersection. In Section~\ref{Decomposable}, we prove that vertex decomposability and shellability of the independent complex of a complete $s$-uniform $t$-partite hypergraph $\mathcal{H}$ are equivalent to sequentially $S_r$ and sequentially Cohen-Macaulayness, and all these occur if and only if $\mathcal{H}$ has $t-1$ sides with a single vertex, which is also equivalent to being a chordal hypergraph.

\section{ Preliminaries } \label{Preliminaries}

\noindent In this section, we review some notions and facts around hypergraphs and simplicial complexes. Actually, there are some correspondences between simple hypergraphs and simplicial complexes, which we will mention some of them. Also, we introduce the class of $s$-uniform $t$-partite hypergraphs and mention their basic properties, which we will use in the sequel.

First, recall that a \textbf{simplicial complex} $\Delta$ on the vertex set
$V(\Delta)$ is a collection of subsets of $V(\Delta)$
such that if $F\in \Delta$ and $G\subseteq F$, then $G\in  \Delta$.
An element in $\Delta$ is called a \textbf{face} of $\Delta$, and
$F\in \Delta$ is said to be
a \textbf{facet} if $F$ is maximal with respect to inclusion. We denote the set of facets of $\Delta$ by $\mathcal{F}(\Delta)$. Let $\mathcal{F}(\Delta)=\{F_{1},\ldots,F_{q}\}$. We sometimes write $\Delta=\langle F_{1},\ldots,F_{q}\rangle$. A simplicial complex $\Delta=\langle F_{1},\ldots,F_{q}\rangle$ is \textbf{connected} if for every pair $i,j$, where $1\leq i < j\leq q$, there exists a sequence of facets
$F_{t_1},\ldots,F_{t_r}$ of $\Delta$ such that $F_{t_1}=F_i$, $F_{t_r}=F_j$ and $F_{t_s}\cap F_{t_{s+1}}\neq \emptyset$, for $s=1,\ldots,r-1$.
An \textbf{induced subcomplex} of $\Delta$, over $S\subseteq V(\Delta)$, is defined as $\Delta_S=\{F\in \Delta~:~F\subseteq S\}$.
For every face $G\in \Delta$, we define the \textbf{star} and \textbf{link} of $G$ as
$$\mathrm{st}_{\Delta}G=\{F\in \Delta~:~G\cup F\in \Delta\},$$
$$\mathrm{lk}_{\Delta}G=\{F\in \Delta~:~G\cap F=\emptyset~,~G\cup F\in \Delta\}.$$
The \textbf{dimension} of a face $F$ is $|F|-1$. Let
$d=\textrm{max}\{|F|~:~F\in \Delta \}$. Then the
\textbf{dimension} of $\Delta$, which is denoted by $\textrm{dim}(\Delta)$,
is $d-1$. Let $f_{i}=f_{i}(\Delta)$ denote the number of faces of dimension
$i$. The sequence $f(\Delta)=(f_{0},f_{1},\ldots,f_{d-1})$ is
called the $f$-vector of $\Delta$. By the convention, we set $f_{-1}=1$.
We say that $\Delta$ is \textbf{pure} if all of its facets have the same dimension.
An important class of pure simplicial complexes is the class of matroids. A \textbf{matroid} is a simplicial complex such that if $F$ and $G$ are two faces of it, and $F$ has more elements than $G$, then there exists an element in $F$ which is not in $G$ that when added to $G$ still gives a face. Another class of pure simplicial complexes which contains matroids is the class of tight complexes. A pure simplicial complex $\Delta$ is called a \textbf{tight complex} if there is a labelling of the
vertices such that for every pair of facets $G_1$, $G_2$ and vertices $i\in G_1\setminus G_2$, $j\in G_2\setminus G_1$
with $i<j$, there is a vertex $j'\in G_1\setminus G_2$ such that $(G_2\setminus \{j\})\cup \{j'\}$ is a facet.

A \textbf{transversal} of a simplicial complex $\Delta$ (resp. hypergraph $\mathcal{H}$) is a subset $A$ of its vertex set, with
the property that for every facet $F_{i}$ (resp. edge $e$), $F_{i}\cap A\neq \emptyset$ (resp. $e\cap A\neq \emptyset$).
A \textbf{minimal transversal} of $\Delta$ (resp. $\mathcal{H}$) is a subset $A$ of vertices such that $A$ is a
transversal and no proper subset of $A$ is a transversal of $\Delta$ (resp. $\mathcal{H}$). The minimal
number of vertices of all the minimum transversals of $\Delta$ (resp. $\mathcal{H}$) is called the \textbf{transversal number} of $\Delta$ (resp. $\mathcal{H}$), and denoted by $\tau(\Delta)$ (resp. $\tau(\mathcal{H})$).

The family of minimal transversals of a hypergraph $\mathcal{H}$ constitutes a simple hypergraph on $V(\mathcal{H})$, called the \textbf{transversal hypergraph} of $\mathcal{H}$ and is denoted by $\mathrm{Tr}(\mathcal{H})$. \\
An \textbf{independent} set of $\mathcal{H}$ is a set of vertices which does not contain any edges of $\mathcal{H}$. One can see that a subset of vertices $F$ is maximal independent if and only if $V(\mathcal{H})\setminus F$ is minimal transversal, that is $F\in\mathcal{F}(\mathrm{Ind}(\mathcal{H}))$ if and only if $V(\mathcal{H})\setminus F\in \mathrm{Tr}(\mathcal{H})$. The maximal
number of vertices of all the maximal independent sets of $\mathcal{H}$ is called the \textbf{independent number} of $\mathcal{H}$, and denoted by $i(\mathcal{H})$.
The \textbf{independent complex} of a hypergraph $\mathcal{H}$ is defined as
$$\mathrm{Ind}(\mathcal{H})=\langle F\subseteq V(\mathcal{H})~:~F~\mathrm{is~a~maximal~independent~set~of~}\mathcal{H} \rangle.$$
One can see that ${(\mathrm{Ind}(\mathcal{H}))}_{W}=\mathrm{Ind}({\mathcal{H}}_{W})$, for all $W\subseteq V(\mathcal{H})$. \\
The \textbf{edge complex} of a hypergraph $\mathcal{H}$ is defined as
$$\Delta(\mathcal{H})=\langle e~:~e\in\mathcal{H} \rangle.$$
Similarly, associated to a simplicial complex $\Delta$ is the \textbf{facet hypergraph} which is defined as
$$\mathcal{H}(\Delta)=\{e\subseteq V(\Delta)~:~e\in \mathcal{F}(\Delta)\}.$$

Let $R=K[x_1,\ldots,x_n]$, where $K$ is a field. The \textbf{edge ideal} of a hypergraph $\mathcal{H}$ on $n$ vertices, is the monomial
ideal $$I({\mathcal{H}})=(\textbf{x}^e~:~e\in \mathcal{H}).$$ Note that by $\textbf{x}^e$, we mean $x_{i_1}\cdots x_{i_s}$, where $e=\{x_{i_1},\ldots, x_{i_s}\}$. \\
The \textbf{Stanley-Reisner ideal} of a simplicial complex $\Delta$ is the monomial ideal
$$I_{\Delta}=(\textbf{x}^F~:~F\notin \Delta).$$
One can easily see that $I(\mathcal{H})=I_{{\mathrm{Ind}}(\mathcal{H})}$. The \textbf{Stanley-Reisner ring} of $\Delta$ is defined as $R/I_{\Delta}$.  \\
The \textbf{Alexander dual} of $\Delta$ is the simplicial complex
$$\Delta^{\vee}=\{F^{c} : F\notin \Delta\}.$$
Let $I$ be a squarefree monomial ideal. The \textbf{squarefree
Alexander dual} of $$I=(x_{1,1}\cdots
x_{1,s_1},\ldots,x_{t,1}\cdots x_{t,s_t})$$ is the ideal
$$I^{\vee}=(x_{1,1},\ldots,x_{1,s_1})\cap\cdots\cap(x_{t,1},\ldots,x_{t,s_t}).$$
One has $I_{\Delta^{\vee}}={(I_\Delta)}^{\vee}$. Also, note that $(I(\mathcal{H}))^{\vee}=I(\mathrm{Tr}(\mathcal{H}))$. \\

An $s$-uniform hypergraph $\mathcal{H}$ is said to be $t$-\textbf{partite} (sometimes called $t$-\textbf{colored} $s$-graph), if its vertex set $V(\mathcal{H})$ can be partitioned into sets $V_1, V_2, \ldots, V_t$, called
the \textbf{sides} of $\mathcal{H}$, such that every edge in the edge set $E(\mathcal{H})$ of $\mathcal{H}$ consists of a choice of precisely one vertex from each side. When $s=t$, it is also said that every edge is \textbf{colorful}. An $s$-uniform $t$-partite hypergraph consisting all possible edges in this way, is called the \textbf{complete} $s$-uniform $t$-partite hypergraph. A $t$-partite hypergraph is said to be $m$-\textbf{balanced} if $|V_i|=m$ for every $i=1,\ldots,t$. Throughout the paper, $\mathcal{H}$ is the complete $s$-uniform $t$-partite hypergraph with sides $V_1, V_2, \ldots, V_t$, unless something else is mentioned.

The next proposition follows by definitions and properties mentioned above:

\begin{prop}\label{basic properties}
Let $\mathcal{H}$ be a complete $s$-uniform $t$-partite hypergraph with $2\leq s\leq t$, and $|V_1|\leq |V_2|\leq \cdots \leq |V_t|$. Then we have:\\\\
{\em{(a)}} $\mathrm{Ind}(\mathcal{H})=\big{\langle} \bigcup_{j=1}^{s-1} V_{i_j}~:~1\leq i_1< \cdots <i_{s-1}\leq t\big{\rangle}$. \\\\
{\em{(b)}} $\mathrm{Tr}(\mathcal{H})=\big{\{} \bigcup_{j=1}^{t-s+1}V_{i_j}~:~1\leq i_1< \cdots< i_{t-s+1}\leq t\big{\}}$. \\\\
{\em{(c)}} $i(\mathcal{H})=\sum_{i=t-s+2}^{t}|V_i|$, and $\tau(\mathcal{H})=\sum_{i=1}^{t-s+1}|V_i|$.\\\\
{\em{(d)}} $\mathrm{dim}(R/I(\mathcal{H}))=\mathrm{dim}(\mathrm{Ind}(\mathcal{H}))+1=\sum_{i=t-s+2}^{t}|V_i|$, and $\mathrm{ht}(I(\mathcal{H}))=\sum_{i=1}^{t-s+1}|V_i|$. \\\\
{\em{(e)}} If $\mathcal{H}$ is $m$-balanced for some $m\geq 1$, then $\mathrm{ht}(I(\mathcal{H}))=m(t-s+1)$ and $\mathrm{dim}(R/I(\mathcal{H}))=m(s-1)$.
\end{prop}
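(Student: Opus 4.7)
The plan is to prove the parts in order, with (a) doing the main combinatorial work and the remaining parts being consequences.

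For (a), I would first observe that any subset $S \subseteq V(\mathcal{H})$ meeting $s$ different sides must contain an edge, since by completeness every choice of one vertex from each of $s$ sides is an edge of $\mathcal{H}$. Therefore an independent set of $\mathcal{H}$ is contained in the union of at most $s-1$ sides. Conversely, any subset contained in a union of at most $s-1$ sides hits at most $s-1$ distinct sides and so cannot contain an edge (which must hit exactly $s$). Hence the independent sets are precisely the subsets of unions of $s-1$ sides, and among these the maximal ones are exactly the full unions $\bigcup_{j=1}^{s-1} V_{i_j}$ for $1 \le i_1 < \cdots < i_{s-1} \le t$. This gives the stated facet description of $\mathrm{Ind}(\mathcal{H})$.

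For (b), I would invoke the correspondence recalled in the preliminaries: $F \in \mathcal{F}(\mathrm{Ind}(\mathcal{H}))$ if and only if $V(\mathcal{H}) \setminus F \in \mathrm{Tr}(\mathcal{H})$. Taking complements of the facets from (a) yields unions of the remaining $t-(s-1) = t-s+1$ sides, establishing the formula for $\mathrm{Tr}(\mathcal{H})$.

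For (c), maximizing the cardinality of a facet in (a) means choosing the $s-1$ sides of largest size; by the ordering $|V_1| \le \cdots \le |V_t|$, this is $V_{t-s+2} \cup \cdots \cup V_t$, with cardinality $\sum_{i=t-s+2}^{t}|V_i|$. Dually, the smallest minimal transversal from (b) is $V_1 \cup \cdots \cup V_{t-s+1}$, giving $\tau(\mathcal{H}) = \sum_{i=1}^{t-s+1}|V_i|$. Part (d) then follows from two standard Stanley--Reisner facts: $\dim(R/I_\Delta) = \dim(\Delta)+1$ (applied to $\Delta = \mathrm{Ind}(\mathcal{H})$, noting $\dim(\mathrm{Ind}(\mathcal{H})) = i(\mathcal{H})-1$), and that the minimal primes of the squarefree monomial ideal $I(\mathcal{H})$ are generated by the minimal vertex covers, so $\mathrm{ht}(I(\mathcal{H}))$ equals $\tau(\mathcal{H})$. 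Part (e) is immediate substitution: if $|V_i| = m$ for all $i$, then $\sum_{i=1}^{t-s+1}|V_i| = m(t-s+1)$ and $\sum_{i=t-s+2}^{t}|V_i| = m(s-1)$.

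There is no real obstacle here; the only care needed is in (a), where one must verify both that no facet can meet $s$ or more sides (forced by completeness) and that every union of $s-1$ full sides really is independent and maximal (if one added any vertex from a further side, one would create a set meeting $s$ different sides, which by completeness contains an edge). Everything else is bookkeeping via the transversal/independent-set duality and standard dimension/height formulas for Stanley--Reisner rings.
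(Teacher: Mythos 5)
Your proposal is correct, and it is essentially the argument the paper intends: the paper states that this proposition "follows by definitions and properties mentioned above" and omits the details, which your write-up simply fills in (completeness forces any set meeting $s$ sides to contain an edge, so facets of $\mathrm{Ind}(\mathcal{H})$ are unions of $s-1$ sides; the rest follows by the independent-set/transversal complementation and the standard identifications $\dim(R/I_\Delta)=\dim(\Delta)+1$ and $\mathrm{ht}(I(\mathcal{H}))=\tau(\mathcal{H})$). No gaps to report.
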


\section{Algebraic properties (Gorenstein, Cohen-Macaulay, $S_r$, Buchsbaum, and more) }\label{Algebraic}

\noindent In this section, we study some of the algebraic properties of the edge ideal of a complete $s$-uniform $t$-partite hypergraph. We characterize complete intersection, almost complete intersection, Gorenstein, level, $l$-Cohen-Macaulay, $l$-Buchsbaum, $S_r$, and unmixed complete uniform multipartite hypergraphs. Moreover, we show that level, Cohen-Macaulay and $S_r$ properties are equivalent. On the other hand, we prove that complete intersection and Gorenstein properties coincide. The following theorem characterizes all unmixed edge ideals of complete multipartite hypergraphs. First, note that the edge ideal $I(\mathcal{H})$ is unmixed if and only if $\mathcal{H}$ is an unmixed hypergraph.

\begin{thm}\label{Unmixed}
Let $\mathcal{H}$ be a complete $s$-uniform $t$-partite hypergraph, with $2\leq s\leq t$. Then $I(\mathcal{H})$ is unmixed if and only if
$\mathcal{H}$ is $m$-balanced for some $m\geq 1$.
\end{thm}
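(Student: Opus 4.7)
The plan is first to recast the algebraic statement combinatorially, then to deduce $m$-balancedness by an elementary swap argument on the sides.

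Since $I(\mathcal{H})=I_{\mathrm{Ind}(\mathcal{H})}$ is a squarefree monomial (Stanley--Reisner) ideal, its minimal primes are in bijection with the minimal transversals of $\mathcal{H}$: each minimal transversal $A$ gives a minimal prime $(x_v : v\in A)$ of height $|A|$. Thus $I(\mathcal{H})$ is unmixed if and only if all minimal transversals of $\mathcal{H}$ have the same cardinality. By Proposition~\ref{basic properties}(b), the minimal transversals are precisely the unions $\bigcup_{j=1}^{t-s+1}V_{i_j}$ with $1\le i_1<\cdots<i_{t-s+1}\le t$. Hence unmixedness of $I(\mathcal{H})$ is equivalent to the numerical condition that the sums
$$S(i_1,\dots,i_{t-s+1}):=\sum_{j=1}^{t-s+1}|V_{i_j}|$$
are independent of the chosen $(t-s+1)$-subset of $\{1,\dots,t\}$.

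The direction $(\Leftarrow)$ is then immediate: if $|V_i|=m$ for every $i$, each such sum equals $m(t-s+1)$. For the direction $(\Rightarrow)$, I will use a one-element swap. Assume without loss of generality $|V_1|\le|V_2|\le\cdots\le|V_t|$, and fix arbitrary distinct $i,j\in\{1,\dots,t\}$. Since the hypothesis $2\le s\le t$ gives $t-s+1\le t-1$, I may pick a $(t-s+1)$-subset $A\subseteq\{1,\dots,t\}\setminus\{j\}$ containing $i$, and then set $B=(A\setminus\{i\})\cup\{j\}$, which is also a $(t-s+1)$-subset. Both $A$ and $B$ index minimal transversals, so the sum-constancy forces $S(A)=S(B)$; after cancelling the $t-s$ shared terms this reduces to $|V_i|=|V_j|$. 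Since $i,j$ were arbitrary, all sides have the same cardinality, so $\mathcal{H}$ is $m$-balanced with $m=|V_1|$.

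I do not anticipate a real obstacle: once the bridge to minimal transversals via Proposition~\ref{basic properties}(b) is installed, the rest is elementary counting. The one spot requiring a moment of care is checking that the swap is legal in all allowed ranges of $s$ and $t$, which is exactly the inequality $t-s+1\le t-1$ ensured by $s\ge 2$; the degenerate boundary $s=t$ (where minimal transversals are single sides $V_i$) still yields the same conclusion, even more directly, from unmixedness.
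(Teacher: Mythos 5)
Your proposal is correct and follows essentially the same route as the paper: reduce unmixedness to the statement that all minimal transversals (the unions of $t-s+1$ sides, via Proposition~\ref{basic properties}(b)) have equal cardinality, and then conclude that this forces all sides to have the same size. The only difference is that you spell out the elementary counting step with an explicit one-element swap (legitimate precisely because $s\ge 2$ gives $t-s+1\le t-1$), a detail the paper simply asserts.
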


\begin{proof}
We have $I(\mathcal{H})$ is unmixed if and only if all the minimal transversals of $\mathcal{H}$ have the same cardinality, that is all elements of
$\mathrm{Tr}(\mathcal{H})$ have the same cardinality. On the other hand, the cardinality of the union of every $t-s+1$ sides of $\mathcal{H}$ is the same if and only if all the sides have the same number of elements. Thus, by Proposition~\ref{basic properties}, $I(\mathcal{H})$ is unmixed if and only if all the sides of $\mathcal{H}$ have the same size, say $m\geq 1$, that is $\mathcal{H}$ is $m$-balanced.
\end{proof}

Let $\Delta$ be a simplicial complex of dimension $d-1$. According to the famous Reisner's criterion, $R/I_\Delta$ is Cohen-Macaulay if and only if $\widetilde{H}_i({\mathrm{lk}}_\Delta F;K)=0$, for all $F\in \Delta$ and all $i<\mathrm{dim}({\mathrm{lk}}_{\Delta}F)$. Also, Terai has formulated the analogue of Reisner's criterion
for Cohen-Macaulay simplicial complexes in the case of $S_r$ simplicial complexes, where $r\geq 2$ (see \cite[page~4, after Theorem~1.7]{T}). Precisely, $R/I_\Delta$ is
$S_r$ if and only if for all $-1\leq i\leq r - 2$ and all
$F\in \Delta$ with $|F|\leq d-i-2$, we have $\widetilde{H}_i({\mathrm{lk}}_\Delta F;K)=0$. \\
Moreover, recall that $R/I_\Delta$ is level if and only if it is Cohen-Macaulay and $\beta_p(R/I_\Delta)=\beta_{p,j}(R/I_\Delta)$, for some $j$, where $p=\mathrm{pd}(R/I_\Delta)$. Now we have the following characterization of complete $s$-uniform $t$-partite hypergraphs:

\begin{thm}\label{CM}
Let $\mathcal{H}$ be a complete $s$-uniform $t$-partite hypergraph with $2\leq s\leq t$, and let $r\geq 2$. Then the following conditions are equivalent:\\
{\em{(a)}} $R/I(\mathcal{H})$ is level. \\
{\em{(b)}} $R/I(\mathcal{H})$ is Cohen-Macaulay.\\
{\em{(c)}} $R/I(\mathcal{H})$ satisfies Serre's condition $S_r$.\\
{\em{(d)}} $\mathcal{H}$ is $1$-balanced, i.e. $\mathcal{H}$ is the complete $s$-uniform hypergraph over $t$ vertices.\\
{\em{(e)}} $\mathrm{Ind}(\mathcal{H})$ is a matroid.\\
{\em{(f)}} $\mathrm{Ind}(\mathcal{H})$ is a tight complex.
\end{thm}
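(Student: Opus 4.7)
The plan is to close the circle of implications $(d) \Rightarrow (e) \Rightarrow (a) \Rightarrow (b) \Rightarrow (c) \Rightarrow (d)$ and to establish separately the loop $(e) \Rightarrow (f) \Rightarrow (d)$. The easy steps can be handled at once. When $\mathcal{H}$ is $1$-balanced, Proposition~\ref{basic properties}(a) identifies $\mathrm{Ind}(\mathcal{H})$ with the uniform matroid $U_{s-1,t}$ (the $(s-2)$-skeleton of the $(t-1)$-simplex), yielding $(d) \Rightarrow (e)$. The inclusion recorded in Section~\ref{Preliminaries} gives $(e) \Rightarrow (f)$; the classical theorem of Stanley that the Stanley-Reisner ring of a matroid is level gives $(e) \Rightarrow (a)$; and the standard implications level $\Rightarrow$ Cohen-Macaulay $\Rightarrow S_r$ provide $(a) \Rightarrow (b) \Rightarrow (c)$.

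The core step is $(c) \Rightarrow (d)$. The $S_r$ hypothesis with $r \geq 2$ entails $S_2$, which via Terai's criterion at $i = -1$ forces $\mathrm{Ind}(\mathcal{H})$ to be pure; equivalently, $I(\mathcal{H})$ is unmixed, so by Theorem~\ref{Unmixed} $\mathcal{H}$ is $m$-balanced for some $m \geq 1$. Suppose $m \geq 2$ for contradiction and consider the face $F := V_1 \cup \cdots \cup V_{s-2}$ (understood as $\emptyset$ when $s=2$), which is independent because it uses only $s-2$ sides. By Proposition~\ref{basic properties}(a) the facets of $\mathrm{Ind}(\mathcal{H})$ containing $F$ are exactly $V_1 \cup \cdots \cup V_{s-2} \cup V_j$ for $j = s-1, \ldots, t$, so the facets of $\mathrm{lk}_{\mathrm{Ind}(\mathcal{H})} F$ are the pairwise disjoint nonempty simplices $V_{s-1}, V_s, \ldots, V_t$. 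These produce $t-s+2 \geq 2$ connected components, hence $\widetilde{H}_0(\mathrm{lk}_{\mathrm{Ind}(\mathcal{H})} F; K) \neq 0$. Since $|F| = (s-2)m$ and $d = m(s-1)$ by Proposition~\ref{basic properties}(d), one has $|F| = d - m \leq d - 2$ precisely when $m \geq 2$, so Terai's criterion at $i = 0$ is violated. This contradiction forces $m = 1$.

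For $(f) \Rightarrow (d)$ I argue directly from the tightness definition. Suppose a labelling of the vertices witnesses tightness of $\mathrm{Ind}(\mathcal{H})$, and assume again $m \geq 2$. Pick two distinct sides $V_a, V_b$ and vertices $i \in V_a$, $j \in V_b$ with $i < j$ (swapping $a \leftrightarrow b$ if necessary), and any $c_1 < \cdots < c_{s-2}$ in $\{1, \ldots, t\} \setminus \{a, b\}$ (possible since $t \geq s$). Set $G_1 := V_a \cup V_{c_1} \cup \cdots \cup V_{c_{s-2}}$ and $G_2 := V_b \cup V_{c_1} \cup \cdots \cup V_{c_{s-2}}$; these are facets with $G_1 \setminus G_2 = V_a$ and $G_2 \setminus G_1 = V_b$. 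Tightness would produce $j' \in V_a$ such that $(G_2 \setminus \{j\}) \cup \{j'\}$ is a facet; but since $m \geq 2$ the set $V_b \setminus \{j\}$ is nonempty, so this set spans the $s$ distinct sides $V_a, V_b, V_{c_1}, \ldots, V_{c_{s-2}}$, contradicting Proposition~\ref{basic properties}(a). Hence $m = 1$.

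The principal obstacle is locating the correct test face in $(c) \Rightarrow (d)$: the face must have a disconnected link while its cardinality falls inside Terai's $S_2$ window $|F| \leq d - 2$. The choice $F = V_1 \cup \cdots \cup V_{s-2}$ is natural because it pins down all but one ``free'' side in any facet containing $F$, so the pairwise disjointness of $V_{s-1}, \ldots, V_t$ delivers the disconnection automatically; everything else is combinatorial bookkeeping with Proposition~\ref{basic properties} or invocation of standard Reisner/Terai-type criteria.
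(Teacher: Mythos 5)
Your proposal is correct, and for implications (a)--(e) it follows essentially the same route as the paper: the same test face $F=V_1\cup\cdots\cup V_{s-2}$ with disconnected link $\langle V_{s-1},\ldots,V_t\rangle$ inside Terai's window for (c) $\Rightarrow$ (d) (the paper cites Hartshorne for ``$S_r$ implies unmixed'' where you extract purity from Terai's criterion at $i=-1$, an equally valid and slightly more self-contained step), the uniform-matroid identification for (d) $\Rightarrow$ (e), Stanley's levelness of matroid Stanley--Reisner rings for (e) $\Rightarrow$ (a), and the standard chain (a) $\Rightarrow$ (b) $\Rightarrow$ (c). Where you genuinely diverge is in handling (f): the paper closes that loop by proving (f) $\Rightarrow$ (b) through Minh--Trung's theorems on symbolic powers (tightness of $\mathrm{Ind}(\mathcal{H})$ gives Cohen--Macaulayness of $R/I(\mathcal{H})^{(2)}$, hence of $R/I(\mathcal{H})$), whereas you prove (f) $\Rightarrow$ (d) directly from the definition of tightness: two facets $G_1=V_a\cup V_{c_1}\cup\cdots\cup V_{c_{s-2}}$ and $G_2=V_b\cup V_{c_1}\cup\cdots\cup V_{c_{s-2}}$ force an exchange set meeting $s$ distinct sides, which cannot even be independent, let alone a facet. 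This is an elementary, purely combinatorial replacement for the paper's appeal to heavy machinery, which is a real gain in self-containedness; the paper's route, by contrast, yields the extra information that tightness controls the second symbolic power. One small point you should make explicit: before ``assume again $m\geq 2$'' in the (f) $\Rightarrow$ (d) argument you need to know $\mathcal{H}$ is $m$-balanced at all; this is immediate because a tight complex is pure by definition, purity of $\mathrm{Ind}(\mathcal{H})$ means $I(\mathcal{H})$ is unmixed, and Theorem~\ref{Unmixed} then gives balancedness (and with $m\geq 2$ both relevant sides have at least two vertices, so your label swap is harmless).
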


\begin{proof}
(a) $\Rightarrow$ (b) $\Rightarrow$ (c), and (e) $\Rightarrow$ (f) are clear.\\\\
(c) $\Rightarrow$ (d) Suppose that $R/I(\mathcal{H})$ satisfies Serre's condition $S_r$. Then $I(\mathcal{H})$ is unmixed, by \cite[Remark~2.4]{H}. So, by Theorem~\ref{Unmixed}, $\mathcal{H}$ is $m$-balanced for some $m\geq 1$. Suppose on the contrary that $m\geq 2$. Let $F:=\bigcup_{i=1}^{s-2}V_i$ and $\Delta:=\mathrm{Ind}(\mathcal{H})$. By Proposition~\ref{basic properties}, $F\in \Delta$, and also $\mathrm{dim}(\Delta)=m(s-1)-1$. Since $m\geq 2$, we have $|F|=m(s-2)\leq (s-1)m-2$, and hence by the formulation of Terai, $\widetilde{H}_0({\mathrm{lk}}_\Delta F;K)=0$. So, ${\mathrm{lk}}_\Delta F=\langle V_{s-1},V_s,\ldots ,V_t \rangle$ is connected, by \cite[Chapter~0, Proposition~3.3]{S}, which is a contradiction. \\\\
(d) $\Rightarrow$ (e) If $\mathcal{H}$ is $1$-balanced, then $\mathrm{Ind}(\mathcal{H})$ is the collection of all subsets of cardinality less than or equal to $s$ of $t$ vertices. So, by definition, $\mathrm{Ind}(\mathcal{H})$ is a matroid.  \\\\
(e) $\Rightarrow$ (a) follows from \cite[Chapter~III, Theorem~3.4]{S}.\\\\
(f) $\Rightarrow$ (b) If $\mathrm{Ind}(\mathcal{H})$ is a tight complex, then $R/{I(\mathcal{H})}^{(2)}$ is Cohen-Macaulay, by \cite[Theorem 2.5]{MT} (see also \cite{MT1}), where ${I(\mathcal{H})}^{(2)}$ is the second symbolic power of $I(\mathcal{H})$. Thus, $R/I(\mathcal{H})$ is also Cohen-Macaulay, by \cite[Theorem 2.1]{MT}.
\end{proof}

When $s=2$, we have a complete $t$-partite graph. So, we deduce the following result of \cite{SRS}:

\begin{cor}\label{graph}
Let $G$ be a complete $t$-partite graph. Then $R/I(G)$ is Cohen-Macaulay if and only if $G$ is the complete graph over $t$ vertices.
\end{cor}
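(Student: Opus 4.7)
The plan is to deduce this statement as an immediate specialization of Theorem~\ref{CM} to the case $s=2$. A complete $t$-partite graph is by definition a complete $2$-uniform $t$-partite hypergraph, so one can apply Theorem~\ref{CM} directly with $s=2$.

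First, I would observe that the implication (b) $\Rightarrow$ (d) of Theorem~\ref{CM} gives that if $R/I(G)$ is Cohen--Macaulay, then $G$ is $1$-balanced, meaning every side $V_i$ consists of a single vertex. Since edges of a $2$-uniform $t$-partite hypergraph with one vertex per side are precisely all $2$-element subsets of a $t$-element vertex set, $G$ must be the complete graph $K_t$ on $t$ vertices. Conversely, by (d) $\Rightarrow$ (b), if $G = K_t$ (i.e.\ $G$ is $1$-balanced as a complete $2$-uniform $t$-partite hypergraph), then $R/I(G)$ is Cohen--Macaulay.

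There is really no obstacle here: the corollary is merely the $s=2$ instance of the equivalence (b) $\iff$ (d) established in Theorem~\ref{CM}, so the proof should consist of one or two sentences invoking that theorem and translating between the language of hypergraphs and that of graphs.
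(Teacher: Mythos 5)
Your proposal is correct and matches the paper exactly: the corollary is stated there as an immediate consequence of Theorem~\ref{CM} specialized to $s=2$, with the same translation that a $1$-balanced complete $2$-uniform $t$-partite hypergraph is the complete graph on $t$ vertices.
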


There is also a formulation for Buchsbaumness  with respect to the reduced homology groups of a simplicial complex $\Delta$ (see \cite[Theorem~2]{M}). According to it,
$R/I_\Delta$ is Buchsbaum if and only if $\Delta$ is pure and $\widetilde{H}_i({\mathrm{lk}}_\Delta F;K)=0$, for all $F\in \Delta \setminus \{\emptyset\}$ and all $i<\mathrm{dim}({\mathrm{lk}}_{\Delta}F)$. So, if $R/I_\Delta$ is Buchsbaum, then it is Cohen-Macaulay.
As a generalization of Buchsbaumness, we recall $l$-Buchsbaum property ($l$-Cohen-Macaulay property is defined similarly). Precisely, $R/I_\Delta$ is called $l$-Buchsbaum (resp. $l$-Cohen-Macaulay) if for all $W\subset V=V(\Delta)$ with $|W|<l$, one has $R/I_{\Delta_{V\setminus W}}$ is Buchsbaum (resp. Cohen-Macaulay) and $\mathrm{dim}(\Delta)=\mathrm{dim}(\Delta_{V\setminus W})$ (see \cite[page~370]{M1}). Obviously, $1$-Buchsbaum (resp. $1$-Cohen-Macaulay) and Buchsbaum (resp. Cohen-Macaulay) properties coincide.

The following theorem shows that whenever $s\geq 3$, Buchsbaum property is also equivalent to those mentioned in Theorem \ref{CM}. More generally, we have:

\begin{thm}\label{Bbm-s>2}
Let $\mathcal{H}$ be a complete $s$-uniform $t$-partite hypergraph, with $3\leq s\leq t$, and let $l\geq 1$. Then $R/I(\mathcal{H})$ is $l$-Buchsbaum if and only if $\mathcal{H}$ is $1$-balanced and $l\leq t-s+2$. In particular, Buchsbaum and $2$-Buchsbaum properties for $R/I(\mathcal{H})$ coincide, and they are equivalent to the conditions in Theorem \ref{CM}.
\end{thm}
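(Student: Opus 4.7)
The plan is to unpack the definition of the $l$-Buchsbaum property, reduce everything to statements about the induced subcomplexes $\Delta_{V\setminus W} = \mathrm{Ind}(\mathcal{H}_{V\setminus W})$ for $W\subset V(\mathcal{H})$ with $|W|<l$, and combine the Buchsbaum criterion recalled just before the theorem with Theorems~\ref{CM} and~\ref{Unmixed}. I would handle the two implications separately.

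For the direction ``$\Leftarrow$'', assume $\mathcal{H}$ is $1$-balanced (so $|V(\mathcal{H})|=t$) and $l\leq t-s+2$. For any $W$ with $|W|<l$, the remaining vertex set has size $t-|W|\geq t-(l-1)\geq s-1$, and $\mathcal{H}_{V\setminus W}$ is again complete $s$-uniform and $1$-balanced on $t-|W|$ vertices (or edgeless if $t-|W|=s-1$). If $t-|W|\geq s$, Theorem~\ref{CM} yields that $R/I(\mathcal{H}_{V\setminus W})$ is Cohen--Macaulay; if $t-|W|=s-1$, then $\Delta_{V\setminus W}$ is the full simplex on $s-1$ vertices, still Cohen--Macaulay. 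In either case $\dim\Delta_{V\setminus W}=s-2=\dim\Delta$, and $l$-Buchsbaumness follows since Cohen--Macaulay implies Buchsbaum.

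For ``$\Rightarrow$'', specializing $W=\emptyset$ yields that $R/I(\mathcal{H})$ is Buchsbaum, so $\Delta=\mathrm{Ind}(\mathcal{H})$ is pure; by Theorem~\ref{Unmixed} this forces $\mathcal{H}$ to be $m$-balanced for some $m\geq 1$. To pin down $m=1$, I consider the nonempty face $F=V_1\cup\cdots\cup V_{s-2}\in\Delta\setminus\{\emptyset\}$ (nonempty because $s\geq 3$, and a face by Proposition~\ref{basic properties}(a)). A direct computation from the description of $\Delta$ in Proposition~\ref{basic properties}(a) shows that $\mathrm{lk}_\Delta F$ has exactly the $t-s+2$ sides $V_{s-1},V_s,\ldots,V_t$ as its facets, which are pairwise disjoint. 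If $m\geq 2$, then $\dim\mathrm{lk}_\Delta F=m-1\geq 1$ while the link has $t-s+2\geq 2$ connected components, so $\widetilde{H}_0(\mathrm{lk}_\Delta F;K)\neq 0$, contradicting Buchsbaumness; hence $m=1$. With $\dim\Delta=s-2$ in hand, the required equality $\dim\Delta_{V\setminus W}=\dim\Delta$ for all $|W|<l$ forces $t-|W|\geq s-1$, and taking $|W|=l-1$ yields $l\leq t-s+2$.

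The main obstacle is locating the single face $F$ whose link exposes the failure of Buchsbaumness when $m\geq 2$; the choice $F=V_1\cup\cdots\cup V_{s-2}$ is what makes the link collapse to a disjoint union of the remaining sides, so that a single $\widetilde{H}_0$ computation suffices. This also clarifies the role of the hypothesis $s\geq 3$: for $s=2$ the face $F$ would be empty, for which the Buchsbaum criterion imposes no constraint, and the whole argument would break down, matching the genuinely weaker behaviour of Buchsbaumness in the graph case.
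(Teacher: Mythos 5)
Your proposal is correct and follows essentially the same route as the paper: the converse verifies Cohen--Macaulayness of each $\Delta_{V\setminus W}$ via Theorem~\ref{CM} (with the edgeless/simplex case handled separately), and the forward direction uses unmixedness plus the Buchsbaum criterion applied to the very same face $F=V_1\cup\cdots\cup V_{s-2}$, whose link $\langle V_{s-1},\ldots,V_t\rangle$ is disconnected when $m\geq 2$, followed by the same dimension count giving $l\leq t-s+2$.
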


\begin{proof}
Let $\Delta:=\mathrm{Ind}(\mathcal{H})$. If $R/I(\mathcal{H})$ is $l$-Buchsbaum, then it is Buchsbaum, and hence unmixed. So, by Theorem~\ref{Unmixed}, $\mathcal{H}$ is $m$-balanced for some $m\geq 1$. Suppose that $m\geq 2$, and let $F:=\bigcup_{i=1}^{s-2}V_i$. Since $s\geq 3$, we have $\emptyset \neq F\in \Delta$, by Proposition~\ref{basic properties}. Using a similar argument as in the proof of Theorem~\ref{CM}, we have $\widetilde{H}_0({\mathrm{lk}}_\Delta F;K)=0$, since $m\geq 2$, ${\mathrm{lk}}_\Delta F=\langle V_{s-1},V_s,\ldots ,V_t \rangle$, and so $\mathrm{dim}({\mathrm{lk}}_\Delta F)=m-1$. Hence, ${\mathrm{lk}}_\Delta F$ is connected, a contradiction. So, $\mathcal{H}$ is $1$-balanced. Therefore, $\Delta$ is the simplicial complex over $t$ vertices, whose facets are all subsets of cardinality $s-1$ of the vertex set, and $\mathrm{dim}(\Delta)=s-2$.
On the other hand, since $\mathcal{H}$ is $l$-Buchsbaum,
we have $\mathrm{dim}(\Delta)=\mathrm{dim}(\Delta_{V\setminus W})$, for all $W\subset V=V(\Delta)$ with $|W|<l$. Thus, we have $s-1\leq t-(l-1)$, and so $l\leq t-s+2$. Conversely, suppose that $\mathcal{H}$ is $1$-balanced and $l\leq t-s+2$. Assume that $W\subset V$ such that $|W|<l$. Hence, $s\leq t-|W|+1$. If $s=t-|W|+1$, then ${\mathcal{H}}_{V\setminus W}$ does not have any edges, so that $\Delta_{V\setminus W}=\mathrm{Ind}({\mathcal{H}}_{V\setminus W})$ is a simplex on $V\setminus W$ of dimension $t-|W|-1=s-2$. Thus, $R/I({\mathcal{H}}_{V\setminus W})$ is Cohen-Macaulay. If $s\leq t-|W|$, then ${\mathcal{H}}_{V\setminus W}$ is a $1$-balanced complete $s$-uniform $(t-|W|)$-partite hypergraph. Thus, by Theorem~\ref{CM}, $R/I({\mathcal{H}}_{V\setminus W})$ is Cohen-Macaulay. Moreover, by Proposition~\ref{basic properties}, $\mathrm{dim}(\Delta_{V\setminus W})=\mathrm{dim}(\Delta)=s-2$. Therefore, in both cases, $R/I(\mathcal{H})$ is $l$-Cohen-Macaulay, and hence $l$-Buchsbaum.
\end{proof}

The next theorem completes the characterization of the edge ideal of complete $s$-uniform $t$-partite hypergraphs with respect to Buchsbaum property.
We denote an $m$-balanced complete $t$-partite graph, where $m\geq 1$ and $t\geq 2$, by $K_{t\ast m}$.

\begin{thm}\label{Bbm-s=2}
Let $G$ be a complete $t$-partite graph with $t\geq 2$. Then the following conditions are equivalent:\\
{\em{(a)}} $R/I(G)$ is Buchsbaum.\\
{\em{(b)}} $I(G)$ is unmixed.\\
{\em{(c)}} $G$ is just $K_{t\ast m}$, for some $m\geq 1$.
\end{thm}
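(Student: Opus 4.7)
The plan is to prove the cycle $(a)\Rightarrow (b)\Rightarrow (c)\Rightarrow (a)$. The implication $(a)\Rightarrow (b)$ is immediate: Miyazaki's criterion (recalled just before Theorem~\ref{Bbm-s>2}) requires $\mathrm{Ind}(G)$ to be pure, so all minimal primes of $I(G)$ have the same height, i.e.\ $I(G)$ is unmixed. For $(b)\Leftrightarrow (c)$, I would just specialize Theorem~\ref{Unmixed} to $s=2$: it says precisely that $I(G)$ is unmixed if and only if $G$ is $m$-balanced for some $m\geq 1$, which by definition is $K_{t\ast m}$.

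The content lies in $(c)\Rightarrow (a)$. Set $\Delta:=\mathrm{Ind}(K_{t\ast m})$. By Proposition~\ref{basic properties}(a), with $s=2$, the facets of $\Delta$ are exactly the pairwise disjoint sides $V_1,\ldots,V_t$, each of size $m$, so $\Delta$ is pure of dimension $m-1$ (combinatorially, a disjoint union of $t$ copies of the $(m-1)$-simplex). I would then invoke Miyazaki's criterion and check that $\widetilde{H}_i(\mathrm{lk}_\Delta F;K)=0$ for every nonempty face $F\in\Delta$ and every $i<\dim(\mathrm{lk}_\Delta F)$. Since the $V_j$'s are pairwise disjoint, any nonempty $F\in\Delta$ is contained in a unique $V_i$, and unwinding the definition of link gives $\mathrm{lk}_\Delta F=\langle V_i\setminus F\rangle$, a full simplex on $V_i\setminus F$ (reducing to $\{\emptyset\}$ when $F=V_i$). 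Such a complex is contractible, so its reduced homology vanishes in every nonnegative degree, and the criterion is satisfied.

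The only delicate point, which I expect to be the main conceptual pitfall rather than a real obstacle, is that $R/I(G)$ is typically \emph{not} Cohen-Macaulay when $m\geq 2$ and $t\geq 2$: for the empty face one has $\widetilde{H}_0(\Delta;K)\cong K^{t-1}\neq 0$ since $\Delta$ is disconnected. This is precisely why one must use Miyazaki's criterion, which tests only \emph{nonempty} faces, rather than Reisner's. Once this distinction is in place, the link computation above completes the argument and explains why the case $s=2$ behaves differently from the case $s\geq 3$ treated in Theorem~\ref{Bbm-s>2}.
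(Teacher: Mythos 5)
Your proposal is correct and follows essentially the same route as the paper: unmixedness via Theorem~\ref{Unmixed}, and for (c) $\Rightarrow$ (a) the observation that $\mathrm{Ind}(K_{t\ast m})=\langle V_1,\ldots,V_t\rangle$ is pure and every nonempty face lies in a unique side, so its link is a simplex (or $\{\emptyset\}$) with vanishing reduced homology, which is exactly Miyazaki's criterion as applied in the paper. Your added remark contrasting Miyazaki's criterion with Reisner's is a correct clarification but not a different argument.
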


\begin{proof}
(a) $\Rightarrow$ (b) is clear. \\
(b) $\Rightarrow$ (c) follows from Theorem~\ref{Unmixed}. \\
(c) $\Rightarrow$ (a) Suppose that $G$ is of the form $K_{t\ast m}$, for some $m\geq 1$, and $\Delta:=\mathrm{Ind}(G)$. So, $\Delta=\langle V_1,\ldots,V_t\rangle$. Let $F$ be a nonempty face of $\Delta$. So, it is contained in some $V_i$. Thus, ${\mathrm{lk}}_{\Delta}F$ is either $\{\emptyset \}$ or a simplex, so that $\widetilde{H}_i({\mathrm{lk}}_\Delta F;K)=0$, for all $i$. Therefore, $R/I(G)$ is Buchsbaum.
\end{proof}

\begin{rem}\label{l-Bbm,s=2}
{\em Note that when $G$ is a complete $t$-partite graph, $R/I(G)$ is not $l$-Buchsbaum, for $l\geq 2$, unless $G$ is complete. More precisely, let $G$ be a complete $t$-partite graph on $V$. Also, let $V_1,\ldots,V_t$ be the sides of $G$. If $G$ is non-complete, then $|V_i|\geq 2$, for all $i=1,\ldots,t$. Now, if $R/I(G)$ is $l$-Buchsbaum, for some $l\geq 2$, then it is Buchsbaum and hence it is just $K_{t*m}$, for some $m\geq 2$, by Theorem~\ref{Bbm-s=2}. Let $v\in  V_1$. Then $R/I(G_{V\setminus \{v\}})$ is Buchsbaum, by the definition. But this is a contradiction, again by Theorem~\ref{Bbm-s=2}, because $V_1$ has one vertex less than the other sides of $G_{V\setminus \{v\}}$. }
\end{rem}

The following theorem could be seen as a generalization of the equivalency of parts (b) and (d) of Theorem \ref{CM}:

\begin{thm}\label{l-CM}
Let $\mathcal{H}$ be a complete $s$-uniform $t$-partite hypergraph with $2\leq s\leq t$, and $l\geq 1$. Then $R/I(\mathcal{H})$ is $l$-Cohen-Macaulay if and only if $\mathcal{H}$ is $1$-balanced and $l\leq t-s+2$. In particular, Cohen-Macaulay and $2$-Cohen-Macaulay properties are equivalent for $R/I(\mathcal{H})$.
\end{thm}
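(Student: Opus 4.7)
The plan is to mirror the structure of Theorem~\ref{Bbm-s>2}, but replacing Buchsbaumness by Cohen-Macaulayness throughout. This should remove the need for the extra hypothesis $s\geq 3$, because the delicate step in Theorem~\ref{Bbm-s>2} (namely handling the empty face via Miyazaki's pureness criterion) is unnecessary when we already have Reisner's criterion for Cohen-Macaulayness on all faces. Set $\Delta:=\mathrm{Ind}(\mathcal{H})$ and $V=V(\Delta)$ throughout.

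For the forward direction, assume $R/I(\mathcal{H})$ is $l$-Cohen-Macaulay. Taking $W=\emptyset$ in the defining condition (which is legitimate because $l\geq 1$), the ring $R/I(\mathcal{H})$ is itself Cohen-Macaulay, so Theorem~\ref{CM} forces $\mathcal{H}$ to be $1$-balanced. In particular $\dim(\Delta)=s-2$ by Proposition~\ref{basic properties}. The $l$-Cohen-Macaulay hypothesis also requires $\dim(\Delta_{V\setminus W})=s-2$ for every $W\subset V$ with $|W|<l$; since $\dim(\Delta_{V\setminus W})\leq |V\setminus W|-1=t-|W|-1$, picking any $W$ of size $l-1$ forces $l-1\leq t-s+1$, i.e., $l\leq t-s+2$.

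For the backward direction, assume $\mathcal{H}$ is $1$-balanced and $l\leq t-s+2$, and let $W\subset V$ with $|W|<l$, so $|W|\leq t-s+1$ and $|V\setminus W|\geq s-1$. Because every side has a single vertex, $\mathcal{H}_{V\setminus W}$ is obtained by deleting $|W|$ sides entirely. If $|V\setminus W|\geq s$, then $\mathcal{H}_{V\setminus W}$ is a $1$-balanced complete $s$-uniform $(t-|W|)$-partite hypergraph, so Theorem~\ref{CM} gives Cohen-Macaulayness and Proposition~\ref{basic properties} gives $\dim(\Delta_{V\setminus W})=s-2$. If instead $|V\setminus W|=s-1$, no edge of $\mathcal{H}$ survives (one cannot choose $s$ vertices from $s-1$ surviving sides), so $\Delta_{V\setminus W}$ is the full simplex on $V\setminus W$, whose Stanley-Reisner ring is a polynomial ring of dimension $s-1$, hence Cohen-Macaulay with $\dim(\Delta_{V\setminus W})=s-2$. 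The ``In particular'' statement is then immediate: $l=2$ satisfies $l\leq t-s+2$ exactly when $s\leq t$, which is part of our standing hypothesis. The only real obstacle I anticipate is not overlooking the degenerate boundary case $|V\setminus W|=s-1$, where the induced hypergraph has no edges and must be handled separately from the generic case that reduces to Theorem~\ref{CM}.
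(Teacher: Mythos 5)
Your proposal is correct and follows essentially the same route as the paper: the forward direction uses Theorem~\ref{CM} to get $1$-balancedness and then the dimension condition $\dim(\Delta_{V\setminus W})=\dim(\Delta)=s-2$ with $|W|=l-1$ to force $l\leq t-s+2$, exactly as in the paper's proof. Your backward direction reproduces, case by case (including the edgeless case $|V\setminus W|=s-1$ giving a simplex), the converse argument that the paper proves inside Theorem~\ref{Bbm-s>2} and then cites, correctly observing that this part never needs $s\geq 3$.
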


\begin{proof}
If $R/I(\mathcal{H})$ is $l$-Cohen-Macaulay, then it is also Cohen-Macaulay, and hence unmixed. So, by Theorem~\ref{CM}, $\mathcal{H}$ is $1$-balanced.
Thus, similar to the proof of Theorem~\ref{Bbm-s>2}, $\Delta:=\mathrm{Ind}(\mathcal{H})$ is the simplicial complex over $t$ vertices, whose facets are all subsets of cardinality $s-1$ of the vertex set, and $\mathrm{dim}(\Delta)=s-2$.
Since $\mathcal{H}$ is $l$-Buchsbaum,
we have $\mathrm{dim}(\Delta)=\mathrm{dim}(\Delta_{V\setminus W})$, for all $W\subset V=V(\Delta)$ with $|W|<l$. Thus, we have $s-1\leq t-(l-1)$, and so $l\leq t-s+2$. The converse is actually proved in Theorem~\ref{Bbm-s>2}.
\end{proof}

Recall that for a simplicial complex $\Delta$, $R/I_\Delta$ is complete intersection if $\mu(I_\Delta)=\mathrm{ht}(I_\Delta)$, where by $\mu(I_\Delta)$, we mean the number of minimal generators of $I_\Delta$. Now, we want to characterize all complete uniform multipartite hypergraphs, whose edge ideals are complete intersection and  Gorenstein. For this purpose, the following characterization of Gorenstein simplicial complexes is needed:

\begin{thm}\label{Stanley}
\cite[Chapter II, Theorem 5.1]{S} Fix a field $k$ (or $\mathbb{Z}$). Let $\Delta$ be a simplicial complex and
$\Lambda:=\mathrm{core}(\Delta)$. Then the following are equivalent:\\
\indent {\em {(i)}} $\Delta$ is Gorenstein.\\
\indent {\em {(ii)}} either {\em (1)} $\Delta=\emptyset$, o , or o o , or {\em {(2)}} $\Delta$ is Cohen-Macaulay over $k$ of dimension
\indent $d-1\geq 1$, and the link of every $(d-3)$-face is either a circle or o-o or o-o-o , and
\indent $\widetilde{\chi}(\Lambda)={(-1)}^{\mathrm{dim(\Lambda)}}$ {\em (}the last condition
is superfluous over $\mathbb{Z}$ or if $\mathrm{char}(k)=2${\em )}.\\
Here, $\mathrm{core}(\Delta)=\Delta_{\mathrm{core}(V)}$, in which $\mathrm{core}(V)=\{v\in V~:~\mathrm{st}_{\Delta}\{v\}\neq \Delta\}$, and $\widetilde{\chi}(\Delta)$ is the reduced Euler characteristic of $\Delta$ which is given by $\widetilde{\chi}(\Delta)=-1+\sum_{i=0}^{d-1}(-1)^if_{i}$.
\end{thm}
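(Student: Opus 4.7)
The plan is to prove Theorem~\ref{Stanley} via the standard three-step route: first reduce to the case $\Delta=\Lambda$ using the join-product structure, then establish a Reisner-type criterion for Gorensteinness that turns the algebra into a condition on the homology of links, and finally extract the claimed geometric classification from that condition in low codimension.

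First I would reduce to the case $\Delta=\mathrm{core}(\Delta)$. If $v$ is a vertex with $\mathrm{st}_{\Delta}\{v\}=\Delta$, then $v$ belongs to every facet and $\Delta$ is the join $\{v\}\ast\Delta'$, where $\Delta'$ is the deletion of $v$. At the algebra level this gives $k[\Delta]\cong k[x_{v}]\otimes_{k}k[\Delta']$, so because $k[x_{v}]$ is Gorenstein, $k[\Delta]$ is Gorenstein if and only if $k[\Delta']$ is. Peeling off such cone vertices one at a time reduces to $\Delta=\Lambda$ without changing either the Gorenstein property or the reduced Euler characteristic $\tilde{\chi}(\Lambda)$ appearing in (ii.2). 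The degenerate complexes listed in (ii.1) are exactly the cores too small for the homological criterion below to carry any content, and they are verified by direct inspection.

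Second, I would establish a Reisner-type criterion: $k[\Lambda]$ is Gorenstein iff $\Lambda$ is pure Cohen--Macaulay over $k$ and, for every face $\sigma\in\Lambda$ (including $\sigma=\emptyset$), the link $\mathrm{lk}_{\Lambda}\sigma$ is a $k$-homology sphere of the expected dimension $\dim\Lambda-|\sigma|$. This is obtained by combining Reisner's criterion for Cohen--Macaulayness with the characterization of Gorenstein as Cohen--Macaulay plus canonical-module-of-rank-one, using Hochster's formula to express the relevant graded pieces of local cohomology through $\tilde{H}_{\bullet}(\mathrm{lk}_{\Lambda}\sigma;k)$. The dichotomy between $k$ and $\mathbb{Z}$ in the statement reflects the choice of coefficients carried through Hochster's formula, and the characteristic-two parenthetical is explained by the fact that over $\mathbb{F}_{2}$ the sign in $\tilde{\chi}(\Lambda)=(-1)^{\dim\Lambda}$ is automatically correct.

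Third, I would convert the homology-sphere condition into the geometric classification. Specializing $\sigma=\emptyset$ yields immediately $\tilde{\chi}(\Lambda)=(-1)^{\dim\Lambda}$, which is the last clause of (ii.2). Specializing to a $(d-3)$-face $\sigma$ forces $\mathrm{lk}_{\Lambda}\sigma$ to be a pure $1$-dimensional $k$-homology $1$-sphere, and because $\Lambda$ equals its own core, no cone vertex can appear in the link either. The main obstacle, as I see it, is precisely this final combinatorial classification: translating the homological data together with the no-cone-vertex constraint into the three explicit graphs listed in the theorem (the circle and the two small degenerate options) requires a careful case analysis on the $f$-vector of the link and on the incidence pattern of its edges, which is where Stanley's argument in Chapter~II of \cite{S} does its real work.
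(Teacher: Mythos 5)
First, note that the paper does not prove this statement at all: it is quoted verbatim from Stanley \cite[Chapter~II, Theorem~5.1]{S} and used as a black box in the proof of Theorem~\ref{Gor}, so there is no in-paper argument to compare with; the only meaningful comparison is with Stanley's own proof, and your outline does follow that standard route (strip cone vertices to reduce to $\Lambda=\mathrm{core}(\Delta)$, prove a homological Gorenstein criterion via Hochster's formula and the canonical module, then classify in low dimension).

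As a proof, however, the proposal has genuine gaps. The central step~2 criterion --- $k[\Lambda]$ is Gorenstein iff $\Lambda$ is Cohen--Macaulay and $\mathrm{lk}_{\Lambda}\sigma$ is a $k$-homology sphere of the expected dimension for every $\sigma\in\Lambda$ --- is merely asserted, and it is the entire content of the theorem; ``combining Reisner with canonical-module-of-rank-one via Hochster's formula'' is a program, not an argument, and it must be carried out for $\Lambda$ rather than $\Delta$ (a simplex is Gorenstein although its links are contractible, which is exactly why the core reduction is needed, and you then slide back and forth between $\Delta$ and $\Lambda$ in step~3: the $(d-3)$-faces in condition (ii)(2) are faces of $\Delta$, and their links can be the edge or the path on three vertices precisely because of cone vertices lying outside $\Lambda$). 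More seriously, you only sketch (i)~$\Rightarrow$~(ii). The converse --- that Cohen--Macaulayness, the stated condition on links of $(d-3)$-faces, and $\widetilde{\chi}(\Lambda)=(-1)^{\dim\Lambda}$ together force $\widetilde{H}_{\dim\mathrm{lk}_{\Lambda}\sigma}(\mathrm{lk}_{\Lambda}\sigma;k)\cong k$ for \emph{every} face $\sigma$ --- is not addressed and is not a formal ``specialization''; Stanley's proof has to propagate the codimension-two information to all links (showing $\Lambda$ behaves like a connected pseudomanifold), and the Euler characteristic hypothesis is exactly what pins the rank of the top homology to one, its superfluity over $\mathbb{Z}$ or in characteristic $2$ coming from orientability considerations, not from ``the sign being automatically correct over $\mathbb{F}_2$.'' Finally, the remark that ``because $\Lambda$ equals its own core, no cone vertex can appear in the link'' is unjustified at that stage: equality with the core does not pass to links (a path on four vertices equals its core, yet the link of an endpoint is a cone); cone-freeness of links is a consequence of the homology-sphere condition you are trying to establish, so it cannot be used as an input there.
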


\begin{thm}\label{Gor}
Let $\mathcal{H}$ be a complete $s$-uniform $t$-partite hypergraph with $2\leq s\leq t$. Then the following conditions are equivalent:\\
{\em{(a)}} $R/I(\mathcal{H})$ is complete intersection. \\
{\em{(b)}} $R/I(\mathcal{H})$ is Gorenstein.\\
{\em{(c)}} $\mathcal{H}$ is $1$-balanced and $s=t$, i.e. $\mathcal{H}$ has only one edge.
\end{thm}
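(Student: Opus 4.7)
The implication $(a) \Rightarrow (b)$ is standard: every complete intersection is Gorenstein. For $(c) \Rightarrow (a)$, assume $\mathcal{H}$ is $1$-balanced with $s = t$. Then each $V_i$ is a singleton and, by Proposition~\ref{basic properties}(a), $\mathcal{H}$ has only one edge, namely the full vertex set $V$; thus $I(\mathcal{H}) = (x_1 x_2 \cdots x_t)$ is principal. Proposition~\ref{basic properties}(e) yields $\mathrm{ht}(I(\mathcal{H})) = 1 = \mu(I(\mathcal{H}))$, so $R/I(\mathcal{H})$ is a complete intersection.

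The heart of the proof is $(b) \Rightarrow (c)$. Since Gorenstein implies Cohen--Macaulay, Theorem~\ref{CM} already forces $\mathcal{H}$ to be $1$-balanced, and it remains to establish $s = t$. Write $V = \{v_1, \ldots, v_t\}$ and $\Delta := \mathrm{Ind}(\mathcal{H})$. By Proposition~\ref{basic properties}(a), $\Delta$ is the $(s-2)$-skeleton of the simplex on $V$, so $\dim \Delta = s - 2$. Because $t \geq s$, every vertex $v \in V$ is missed by some $(s-1)$-facet of $\Delta$, so $\mathrm{st}_\Delta\{v\} \neq \Delta$, giving $\mathrm{core}(\Delta) = \Delta$. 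I then apply Stanley's criterion (Theorem~\ref{Stanley}) to $\Delta$.

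When $s = 2$, $\Delta$ is the disjoint union of $t$ points, of dimension $0$, so clause (ii)(1) of Theorem~\ref{Stanley} forces $t \leq 2$; together with $t \geq s = 2$ this yields $t = 2 = s$. When $s \geq 3$, clause (ii)(2) applies. A $(d-3) = (s-4)$-face $F$ has cardinality $s-3$, and a direct computation gives
\[
\mathrm{lk}_\Delta F = \{\, G \subseteq V \setminus F : |G| \leq 2 \,\},
\]
namely the complete graph $K_n$ on $n = t - s + 3$ vertices, regarded as a $1$-dimensional simplicial complex. Since $n \geq 3$, this link is not ``o-o'' ($n=2$ is impossible) and not ``o-o-o'' (which has only $2$ edges, whereas $K_n$ has $\binom{n}{2} \geq 3$); it is a cycle precisely when $\binom{n}{2} = n$, i.e.\ $n = 3$. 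Hence $t - s + 3 = 3$, so $t = s$.

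The only genuinely delicate step is extracting $t = s$ from Stanley's theorem: once the link of a $(d-3)$-face is recognized as the complete graph $K_{t-s+3}$, the requirement that it be a circle, ``o-o'', or ``o-o-o'' reduces to a simple edge-count comparison that forces $n = 3$. The separate treatment of $s = 2$ (via the dimension-zero clause (ii)(1)) and the preliminary verification that $\mathrm{core}(\Delta) = \Delta$ are routine, but both are needed to invoke Theorem~\ref{Stanley} cleanly.
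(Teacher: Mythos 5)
Your proof is correct and follows essentially the same route as the paper: reduce to the $1$-balanced case via Theorem~\ref{CM}, then apply Theorem~\ref{Stanley}, computing that the link of an $(s-3)$-element face of $\mathrm{Ind}(\mathcal{H})$ is the complete graph on $t-s+3$ vertices, which must be a $3$-cycle, forcing $t=s$ (with the $s=2$ case settled by the dimension-zero clause). Your explicit check that $\mathrm{core}(\Delta)=\Delta$ and your handling of $s=2$ as yielding $t=s=2$ rather than a contradiction are minor refinements of the paper's argument, not a different approach.
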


\begin{proof}
(a) $\Rightarrow$ (b) and (c) $\Rightarrow$ (a) are clear. \\
(b) $\Rightarrow$ (c) Suppose that $R/I(\mathcal{H})$ is Gorenstein. So, it is also Cohen-Macaulay and hence $1$-balanced with vertex set $\{v_1,\ldots,v_t\}$, by Theorem~\ref{CM}. Let $\Delta:=\mathrm{Ind}(\mathcal{H})$. If $s=2$, then $\mathrm{dim}(\Delta)=0$, by Proposition~\ref{basic properties}. So, Theorem~\ref{Stanley}, implies that $\Delta$ is
of the form o o, which is a contradiction. So, suppose that $3\leq s$. Then $\mathrm{dim}(\Delta)\geq 1$, again by Proposition~\ref{basic properties}. Let
$F:=\{v_1,\ldots,v_{s-3}\}$. So, ${\mathrm{lk}}_{\Delta}F=\langle \{v_i,v_j\}~:~s-2\leq i< j\leq t\rangle$, that is the complete graph on $t-s+3$ vertices.
Thus, using Theorem~\ref{Stanley}, this complete graph must be a $3$-cycle, so that $t=s$, as desired.
\end{proof}

Let $\Delta$ be a simplicial complex, then $R/I_\Delta$ is called \textbf{almost complete intersection} if $\mu(I_\Delta)=\mathrm{ht}(I_\Delta)+1$. The next theorem determines all the complete uniform multipartite hypergraphs with almost complete intersection edge ideals.

\begin{thm}\label{almost c.i.}
Let $\mathcal{H}$ be a complete $s$-uniform $t$-partite hypergraph with $2\leq s\leq t$. Then $R/I(\mathcal{H})$ is almost complete intersection if and only if one of the following holds:\\
{\em{(a)}} $\mathcal{H}$ is $C_3$, i.e. the $3$-cycle, or \\
{\em{(b)}} $\mathcal{H}$ is the hypergraph over $t+1$ vertices $V=\{v_1,\ldots,v_{t+1}\}$, and with edges $\{v_1,\ldots ,v_{t-1},v_t\}$ and $\{v_1,\ldots,v_{t-1},v_{t+1}\}$.
\end{thm}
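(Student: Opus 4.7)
The plan is to reduce the condition $\mu(I(\mathcal{H})) = \mathrm{ht}(I(\mathcal{H})) + 1$ to a purely numerical equation in the side sizes and then classify its solutions. Writing $n_i := |V_i|$ with $n_1 \leq \cdots \leq n_t$, counting edges of the complete $s$-uniform $t$-partite hypergraph by first choosing $s$ of the $t$ sides and then one vertex from each gives
$$\mu(I(\mathcal{H})) = \sum_{1 \leq i_1 < \cdots < i_s \leq t} n_{i_1} \cdots n_{i_s},$$
while Proposition~\ref{basic properties}(d) yields $\mathrm{ht}(I(\mathcal{H})) = n_1 + \cdots + n_{t-s+1}$. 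The task is therefore to classify the integer solutions of
$$\sum_{1 \leq i_1 < \cdots < i_s \leq t} n_{i_1} \cdots n_{i_s} = n_1 + \cdots + n_{t-s+1} + 1,$$
and to check they correspond exactly to configurations (a) and (b).

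I would split the analysis into the cases $s = t$ and $s < t$. When $s = t$, the equation collapses to $n_1(n_2 \cdots n_t - 1) = 1$, forcing $n_1 = 1$ and $n_2 \cdots n_t = 2$. By the ordering, this means $n_t = 2$ and $n_2 = \cdots = n_{t-1} = 1$, which yields exactly the hypergraph in (b): $t-1$ singleton sides plus one side of size $2$, giving two edges that share $t-1$ vertices.

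When $s < t$, the key tool is the lower bound
$$\mu(I(\mathcal{H})) \geq \mathrm{ht}(I(\mathcal{H})) \cdot \prod_{j=t-s+2}^{t} n_j,$$
obtained by summing only the $t-s+1$ terms of $\mu(I(\mathcal{H}))$ indexed by the subsets $\{i,\, t-s+2,\, \ldots,\, t\}$ with $1 \leq i \leq t-s+1$, and discarding the remaining positive terms. Since $t-s+1 \geq 2$ forces $\mathrm{ht}(I(\mathcal{H})) \geq 2$, if some $n_j$ with $j \geq t-s+2$ were at least $2$ we would have $\mu \geq 2\,\mathrm{ht} \geq \mathrm{ht} + 2$, contradicting $\mu = \mathrm{ht}+1$. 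Hence $n_j = 1$ for every $j \geq t-s+2$, and by the ordering $n_i = 1$ for all $i$. The equation then reduces to $\binom{t}{s} = t - s + 2$, and since $\binom{t}{s} \geq t$ for $2 \leq s \leq t-1$ with equality only when $s = t-1$, this forces $s = t-1$ and $t = 3$, so $(s,t) = (2,3)$ with all sides of size one. This is $C_3$, i.e.\ case (a).

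The converse is a direct verification: for $C_3$ one has $\mu = 3$, $\mathrm{ht} = 2$, while in (b) one has $\mu = 2$, $\mathrm{ht} = 1$, so $\mu = \mathrm{ht} + 1$ in both. The main obstacle I expect is identifying the right collection of terms to group into the lower bound for $\mu$ when $s < t$; grouping them as above is what isolates the sides $V_{t-s+2}, \ldots, V_t$ from those contributing to $\mathrm{ht}(I(\mathcal{H}))$ and lets the analysis collapse to the all-ones configuration and finally to the binomial identity $\binom{t}{s} = t-s+2$.
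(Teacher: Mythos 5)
Your proposal is correct and follows essentially the same route as the paper: both reduce the condition to $\mu(I(\mathcal{H}))=\mathrm{ht}(I(\mathcal{H}))+1$ with $\mu$ the number of edges and $\mathrm{ht}=\sum_{i=1}^{t-s+1}|V_i|$, split into the cases $s=t$ and $s<t$, and in the latter force all sides to be singletons before solving $\binom{t}{s}=t-s+2$. Your grouping of the terms indexed by $\{i\}\cup\{t-s+2,\ldots,t\}$ is exactly the paper's count of edges in the induced subhypergraphs on $W_i=V_i\cup\bigl(\bigcup_{j=t-s+2}^{t}V_j\bigr)$, just phrased via the elementary symmetric polynomial identity.
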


\begin{proof}
If $\mathcal{H}$ is $C_3$, then $\mu(I(\mathcal{H}))=3$ and $\mathrm{ht}(I(\mathcal{H}))=\tau(\mathcal{H})=2$. If $\mathcal{H}$ is the hypergraph over $V$ with edges $\{v_1,\ldots ,v_{t-1},v_t\}$ and $\{v_1,\ldots,v_{t-1},v_{t+1}\}$, then $\mu(I(\mathcal{H}))=2$ and $\mathrm{ht}(I(\mathcal{H}))=\tau(\mathcal{H})=1$. So, in both cases, we have that $R/I(\mathcal{H})$ is almost complete intersection. Conversely, suppose that $R/I(\mathcal{H})$ is almost complete intersection and $|V_1|\leq \cdots \leq |V_t|$. Now, we consider the following cases: \\
(1) Suppose that $s=t$. Then $\tau(\mathcal{H})=|V_1|$, by Proposition~\ref{basic properties}, so that $\mu(I(\mathcal{H}))=|V_1|+1$. If $|V_1|\geq 2$, then $|V_2|\geq 2$, by our assumption. Thus, each vertex of $V_1$ is contained in at least two edges. So, the number of edges of $\mathcal{H}$ and hence $\mu(I(\mathcal{H}))$ is more than or equal to $2|V_1|$, which is a contradiction, since $\mu(I(\mathcal{H}))=|V_1|+1$ and we assumed that $|V_1|\geq 2$. So, we have $|V_1|=1$. Therefore, $\mu(I(\mathcal{H}))=2$, that is $\mathcal{H}$ has exactly two edges. Thus, we have
$|V_1|=\cdots=|V_{t-1}|=1$ and $|V_t|=2$. So, $V_i=\{v_i\}$, for all $i=1,\ldots,t-1$, and $V_t=\{v_t,v_{t+1}\}$. Hence, in this case, $\mathcal{H}$ is of the form (b).  \\
(2) Suppose that $s<t$. Then $\tau(\mathcal{H})=\sum_{i=1}^{t-s+1}|V_i|$, by Proposition~\ref{basic properties}, so that $\mu(I(\mathcal{H}))=\sum_{i=1}^{t-s+1}|V_i|+1$. Suppose that $|V_t|\geq 2$. Let $W_i:=V_i\cup (\bigcup_{j=t-s+2}^tV_j)$, for all $i=1,\ldots,t-s+1$. Then, the induced subhypergraph ${\mathcal{H}}_{W_i}$ has more than or equal to $2|V_i|$ edges, for all $i=1,\ldots,t-s+1$. Also, note that ${\mathcal{H}}_{W_i}$ and ${\mathcal{H}}_{W_j}$ are pairwise disjoint, for all $i\neq j$. Thus, the number of edges of $\mathcal{H}$ is more than or equal to $2\sum_{i=1}^{t-s+1}|V_i|$, so that $\mu(I(\mathcal{H}))\geq 2\sum_{i=1}^{t-s+1}|V_i|$, which is a contradiction, because $\mu(I(\mathcal{H}))=\sum_{i=1}^{t-s+1}|V_i|+1$. Therefore, $|V_t|=1$, which implies that $|V_i|=1$, for all $i=1,\ldots,t$, by our assumption. So, $V=\{v_1,\ldots,v_t\}$ and $\mathcal{H}$ is the set of all subsets of cardinality of $s$ of $V$. So, the number of edges of $\mathcal{H}$ is ${t\choose s}$. On the other hand, $\mu(I(\mathcal{H}))=t-s+2$. Hence, ${t\choose s}=t-s+2$. But, it occurs if and only if $s=2$ and $t=3$. Therefore, in this case, $\mathcal{H}$ is just $C_3$.
\end{proof}

We end this section by some properties of the Alexander dual of the edge ideal of a complete $s$-uniform $t$-partite hypergraph. First, recall that a homogeneous ideal $I$ whose generators all have degree $d$ is said to have a \textbf{$d$-linear resolution} (or simply linear resolution) if
for all $i\geq0$, $\beta_{i,j}(I)=0$ for all $j\neq{i+d}$. By \cite[Theorem~3.2]{MKM}, one has $I(\mathcal{H})$ is weakly polymatroidal, and hence has a linear resolution. So, we have

\begin{prop}\label{Alexander dual}
Let $\mathcal{H}$ be a complete $s$-uniform $t$-partite hypergraph with $2\leq s\leq t$. Then $R/(I(\mathcal{H}))^{\vee}$ is Cohen-Macaulay. Moreover,
$R/(I(\mathcal{H}))^{\vee}$ is complete intersection if and only if $t=s$.
\end{prop}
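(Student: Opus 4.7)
The plan is to treat the two assertions in turn. For the Cohen--Macaulayness of $R/(I(\mathcal{H}))^{\vee}$, the natural tool is the Eagon--Reiner theorem, which asserts that for any simplicial complex $\Delta$, the quotient $R/I_{\Delta^{\vee}}$ is Cohen--Macaulay if and only if $I_{\Delta}$ has a linear resolution. Taking $\Delta:=\mathrm{Ind}(\mathcal{H})$, we have $I_{\Delta} = I(\mathcal{H})$ and $I_{\Delta^{\vee}} = (I(\mathcal{H}))^{\vee}$, and the required linear resolution of $I(\mathcal{H})$ is exactly the statement recalled immediately before the proposition: by \cite[Theorem~3.2]{MKM}, $I(\mathcal{H})$ is weakly polymatroidal, and weakly polymatroidal ideals are linearly resolved. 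This step is essentially a single invocation.

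For the complete-intersection claim, I would employ the $\mu = \mathrm{ht}$ criterion already used in Theorems~\ref{Gor} and~\ref{almost c.i.}. The number of minimal generators is easy: since $(I(\mathcal{H}))^{\vee} = I(\mathrm{Tr}(\mathcal{H}))$, Proposition~\ref{basic properties}(b) shows that the minimal generators of $(I(\mathcal{H}))^{\vee}$ are in bijection with the $(t-s+1)$-subsets of $\{1,\ldots,t\}$, because distinct unions of sides yield squarefree monomials with pairwise incomparable supports. Hence $\mu((I(\mathcal{H}))^{\vee}) = \binom{t}{t-s+1} = \binom{t}{s-1}$. For the height, I would use the standard Alexander-duality dictionary: the minimal primes of a squarefree monomial ideal $J^{\vee}$ correspond bijectively to the minimal generators of $J$, with each prime's height equal to the degree of the associated generator. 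Applied to $J = I(\mathcal{H})$, whose minimal generators all have degree $s$ by $s$-uniformity, this yields $\mathrm{ht}((I(\mathcal{H}))^{\vee}) = s$.

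Combining these, $R/(I(\mathcal{H}))^{\vee}$ is a complete intersection exactly when $\binom{t}{s-1} = s$. Since $\binom{t}{s-1}$ is strictly increasing in $t$ on the range $t \geq s-1$ and $\binom{s}{s-1} = s$, this equality forces $t = s$; conversely, $t = s$ gives equality immediately. There is no real obstacle in the argument; one only needs to keep the Alexander-duality dictionary between minimal generators of a squarefree monomial ideal and minimal primes of its dual straight.
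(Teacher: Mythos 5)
Your argument is correct, and its first half is exactly the paper's: Cohen--Macaulayness of $R/(I(\mathcal{H}))^{\vee}$ via the Eagon--Reiner theorem \cite{ER}, fed by the linear resolution of $I(\mathcal{H})$ coming from weak polymatroidality. For the complete-intersection half you take a genuinely different, numerical route: you compute $\mu\bigl((I(\mathcal{H}))^{\vee}\bigr)=\binom{t}{s-1}$ from Proposition~\ref{basic properties}(b) (the unions of $t-s+1$ sides have pairwise incomparable supports, so they are honest minimal generators), compute $\mathrm{ht}\bigl((I(\mathcal{H}))^{\vee}\bigr)=s$ from the duality dictionary identifying minimal primes of $J^{\vee}$ with minimal generators of $J$, and then solve $\binom{t}{s-1}=s$, which by monotonicity in $t$ forces $t=s$; all of these steps check out. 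The paper instead avoids any counting: it uses the structural criterion that a monomial ideal defines a complete intersection if and only if its minimal monomial generators are pairwise coprime, applied to $(I(\mathcal{H}))^{\vee}=I(\mathrm{Tr}(\mathcal{H}))$ --- when $t=s$ the generators are products over single, pairwise disjoint sides, while for $t>s$ two unions of $t-s+1\geq 2$ sides must share a side, so coprimality fails. Your version buys explicit values of $\mu$ and $\mathrm{ht}$ (and reuses the same $\mu=\mathrm{ht}$ criterion as in Theorems~\ref{Gor} and~\ref{almost c.i.}), at the cost of invoking the height side of the Alexander-duality dictionary; the paper's coprimality check is slightly more direct and purely structural. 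Both are complete proofs.
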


\begin{proof}
The first statement follows easily by using \cite[Theorem~3]{ER}. As we mentioned before, one has $(I(\mathcal{H}))^{\vee}=I(\mathrm{Tr}(\mathcal{H}))$. On the other hand, for a monomial ideal $J$, we know that $R/J$ is complete intersection if and only if $J$ is generated by monomials which are pairwise coprime. Thus, combining these facts together with Proposition~\ref{basic properties}, we get the second statement.
\end{proof}

\section{ Vertex decomposable, Shellable and Sequentially Cohen-Macaulay Complete Multipartite Hypergraphs } \label{Decomposable}

\noindent In this section, we investigate about those complete uniform multipartite hypergraphs, whose independent complexes are decomposable and shellable. In fact, we show that for a complete $s$-uniform $t$-partite hypergraph with $2\leq s\leq t$, $(s-2)$-decomposability, shellability, sequentially Cohen-Macaulayness, and sequentially $S_r$ property are equivalent. Moreover, we obtain a characterization with respect to hypergraph's terms for them. On the other hand, we determine all chordal complete multipartite hypergraphs, in sense of \cite{W}. The following theorem is the main result of this section:

\begin{thm}\label{seq}
Let $\mathcal{H}$ be a complete $s$-uniform $t$-partite hypergraph with $2\leq s\leq t$, and let $r\geq 2$. Then the following conditions are equivalent:\\
{\em{(a)}} $\mathrm{Ind}(\mathcal{H})$ is vertex decomposable.\\
{\em{(b)}} $\mathrm{Ind}(\mathcal{H})$ is shellable.\\
{\em{(c)}} $R/I(\mathcal{H})$ is sequentially Cohen-Macaulay. \\
{\em{(d)}} $R/I(\mathcal{H})$ is sequentially $S_r$.\\
{\em{(e)}} $\mathcal{H}$ has $(t-1)$ sides consisting of exactly one vertex. \\
{\em{(f)}} $\mathcal{H}$ is a chordal hypergraph.
\end{thm}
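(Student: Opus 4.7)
My strategy is to close the cycle (a)~$\Rightarrow$~(b)~$\Rightarrow$~(c)~$\Rightarrow$~(d)~$\Rightarrow$~(e)~$\Rightarrow$~(a) and then to treat (e)~$\Leftrightarrow$~(f) separately. The first three implications are standard non-pure Stanley-Reisner facts: vertex decomposability implies shellability (Bj\"orner--Wachs), shellability implies sequentially Cohen-Macaulayness (Stanley), and in any sequentially Cohen-Macaulay filtration each subquotient is $S_r$, so sequentially $S_r$ follows for every $r \geq 2$.

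The crucial arrow is (d)~$\Rightarrow$~(e). Set $\Delta:=\mathrm{Ind}(\mathcal{H})$ and reindex so that $|V_1|\leq\cdots\leq|V_t|$. The face $F_0:=\bigcup_{i=1}^{s-2}V_i$ (read as $\emptyset$ when $s=2$) lies in $\Delta$ by Proposition~\ref{basic properties}, and a direct computation from that same proposition gives
\[
\mathrm{lk}_\Delta F_0 \;=\; \langle V_{s-1},V_s,\ldots,V_t\rangle,
\]
the disjoint union of simplices on the last $t-s+2$ sides. Since sequentially $S_r$ descends to links, this disjoint union is itself sequentially $S_r$. If (e) fails then $|V_{t-1}|,|V_t|\geq 2$, and the pure $(|V_{t-1}|-1)$-skeleton of $\mathrm{lk}_\Delta F_0$ contains the whole simplex $V_{t-1}$ as a facet together with the entire $(|V_{t-1}|-1)$-skeleton of the simplex on $V_t$. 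These two pieces sit on disjoint vertex sets, so the pure skeleton is disconnected in dimension $\geq 1$, contradicting $S_r$ for $r\geq 2$.

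For (e)~$\Rightarrow$~(a), I will induct on $|V(\mathcal{H})|$, writing $V_i=\{v_i\}$ for $i<t$. I claim $v_1$ is a shedding vertex of $\Delta$: the deletion $\Delta\setminus v_1$ equals $\mathrm{Ind}(\mathcal{H}_{V\setminus\{v_1\}})$, a complete $s$-uniform $(t-1)$-partite hypergraph still of the form~(e), hence vertex decomposable by induction; the link $\mathrm{lk}_\Delta v_1$ equals the independent complex of the complete $(s-1)$-uniform $(t-1)$-partite hypergraph on $V_2,\ldots,V_t$, again of the form~(e), so vertex decomposable by induction; and the shedding condition holds because facets of $\mathrm{lk}_\Delta v_1$ are unions of $s-2$ sides from $\{V_2,\ldots,V_t\}$ whereas facets of $\Delta\setminus v_1$ are unions of $s-1$ such sides, so they cannot coincide. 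The base case of $1$-balanced $\mathcal{H}$ reduces via Theorem~\ref{CM} to a matroid, which is vertex decomposable.

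For (e)~$\Leftrightarrow$~(f), I use the simplicial elimination characterization of chordal hypergraphs from \cite{W}. Under (e), the unique vertex of any singleton side of $\mathcal{H}$ is simplicial, and deleting it yields a complete uniform multipartite hypergraph still of the form~(e); iteration produces an elimination ordering, so $\mathcal{H}$ is chordal. Conversely, if (e) fails then $|V_{t-1}|,|V_t|\geq 2$, and I expect to exhibit an induced subhypergraph (essentially a restriction to a constant number of vertices in each of two large sides) with no simplicial vertex, violating chordality. The principal obstacle I anticipate is the claim, used in (d)~$\Rightarrow$~(e), that sequentially $S_r$ descends to links; if this is not cleanly citable in the required generality, the alternative is to compute the pure $\bigl(\sum_{i=1}^{s-2}|V_i|+|V_{t-1}|-1\bigr)$-skeleton of $\Delta$ itself and show directly that it is disconnected of positive dimension, bypassing the link reduction entirely.
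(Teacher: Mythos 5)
Your cycle (a)$\Rightarrow$(b)$\Rightarrow$(c)$\Rightarrow$(d)$\Rightarrow$(e)$\Rightarrow$(a) matches the paper in substance, but the treatment of chordality contains a genuine error, and your fallback for (d)$\Rightarrow$(e) would fail. For (e)$\Rightarrow$(f) you claim that ``the unique vertex of any singleton side of $\mathcal{H}$ is simplicial.'' This is false as soon as $|V_t|\geq 2$: take $v_1\in V_1$ with $|V_1|=1$, pick $w\neq w'$ in $V_t$, and let $e_1\ni v_1,w$ and $e_2=(e_1\setminus\{w\})\cup\{w'\}$. Then $(e_1\cup e_2)\setminus\{v_1\}$ meets only $s-1$ sides, so it contains no edge of $\mathcal{H}$; already for $s=t=2$ your candidate fails in the star $K_{1,m}$. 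The vertices that are simplicial are those of the large side $V_t$, which is exactly the choice made in the paper. Moreover, chordality in the sense of \cite{W} requires a simplicial vertex in \emph{every} minor (contractions included), so producing a deletion-only elimination order is not by itself enough; the paper verifies that every deletion and every contraction of $\mathcal{H}$ is again a hypergraph of one of the handled shapes (complete multipartite satisfying (e), or complete uniform), and hence has a simplicial vertex. Finally, your (f)$\Rightarrow$(e) is only announced (``I expect to exhibit\dots''), not proved; it can indeed be completed (restrict to two vertices in each of $V_{t-1},V_t$ and one vertex in $s-2$ further sides to get a minor with no simplicial vertex), but the paper avoids this direction entirely by quoting \cite[Corollary 5.4]{W} for (f)$\Rightarrow$(b), which closes the equivalences more cheaply.

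Concerning (d)$\Rightarrow$(e): your primary route (link out $F_0=\bigcup_{i=1}^{s-2}V_i$ and detect a disconnected pure skeleton of positive dimension in $\langle V_{s-1},\dots,V_t\rangle$) is essentially the paper's argument, and the descent of sequential $S_2$ to links of faces is legitimately obtained the way the paper does it, namely by iterating \cite[Theorem~2.9]{HTYZ} one vertex of $F_0$ at a time, since that theorem only speaks of links of single vertices. However, your proposed alternative --- computing a pure skeleton of $\Delta$ itself to bypass the link reduction --- does not work: for the $2$-balanced complete $3$-uniform $3$-partite hypergraph one has $\Delta=\langle V_1\cup V_2,\,V_1\cup V_3,\,V_2\cup V_3\rangle$, which is pure, and every pure skeleton $\Delta^{[i]}$ with $i\geq 1$ is connected because any two facets share a whole side; yet $\Delta$ is not sequentially $S_2$ (the theorem forbids it), and the disconnection only appears after linking away the vertices of $V_1$. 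So the link reduction is not optional, and you should carry it out via the iteration just described. The remaining parts of your proposal, (e)$\Rightarrow$(a) with $v_1$ as shedding vertex and the side-counting argument for the shedding condition, coincide with the paper's proof (mind the degenerate case $s=2$, where the link is the independence complex of a $1$-uniform complete hypergraph, i.e.\ $\{\emptyset\}$, vertex decomposable by convention).
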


To prove this theorem, we need some notions and facts that we will mention in the sequel.

Let $\Delta$ be a $(d-1)$-dimensional simplicial complex. For each $i=0,\ldots,d-1$, the \textbf{pure $i$-th skeleton} of $\Delta$ is defined to be the pure
subcomplex $\Delta^{[i]}$ of $\Delta$ whose facets are those faces $F$ of $\Delta$ with $|F|=i+1$.

A graded $R$-module $M$ is called \textbf{sequentially}
Cohen-Macaulay (resp. $S_r$) (over $K$) if there exists a finite filtration of graded $R$-modules
$0=M_{0}\subset M_{1}\subset\cdots \subset M_{r}=M$
such that each $M_{i}/M_{i-1}$ is Cohen-Macaulay (resp. $S_r$), and the Krull dimensions of the quotients
are increasing, i.e.
$$ \mathrm {dim} (M_{1}/M_{0}) < \mathrm{dim}(M_{2}/M_{1}) < \cdots < \mathrm{dim}(M_{r}/M_{r-1}).$$
In \cite{HTYZ}, the authors gave a criterion for being sequentially $S_2$ of Stanley-Reisner rings. They showed that

\begin{thm}\label{seq S_2}
\cite[Theorem~2.9]{HTYZ} Let $\Delta$ be a simplicial complex with vertex set $V$. Then $R/I_\Delta$ is sequentially $S_2$ if and only if $\Delta^{[i]}$ is connected for all $i\geq 1$, and $R/I_{\mathrm{lk}_\Delta\{x\}}$ is sequentially $S_2$ for all $x\in V$.
\end{thm}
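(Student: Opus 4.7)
The plan is to translate sequential $S_2$ into a condition on pure skeletons via Terai's criterion (recalled in Section~\ref{Algebraic}) and then to unravel the statement recursively using vertex links.

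First I would invoke the \emph{pure-skeleton reduction}: $R/I_\Delta$ is sequentially $S_2$ if and only if $R/I_{\Delta^{[i]}}$ is $S_2$ for every $i = 0, 1, \ldots, \dim(\Delta)$. This is the $S_2$ analogue of Duval's classical theorem for sequential Cohen-Macaulayness, and it follows by identifying the subquotients of the dimension filtration of $R/I_\Delta$ with the Stanley-Reisner rings of the pure skeletons. Combining with Terai's criterion, this says: for every $i$ and every $F \in \Delta^{[i]}$ with $|F| \leq i-1$, the link $\mathrm{lk}_{\Delta^{[i]}} F$ is connected.

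Next I would establish the key combinatorial identity $\mathrm{lk}_{\Delta^{[i]}}\{x\} = (\mathrm{lk}_\Delta\{x\})^{[i-1]}$ for every vertex $x$. This is immediate from the definitions: the facets of both sides are exactly $F \setminus \{x\}$ where $F$ is an $i$-dimensional facet of $\Delta$ containing $x$. Iterating, $\mathrm{lk}_{\Delta^{[i]}} F = (\mathrm{lk}_\Delta F)^{[i-|F|]}$ for $F \in \Delta^{[i]}$.

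For the forward direction, if $R/I_\Delta$ is sequentially $S_2$, then each $\Delta^{[i]}$ is $S_2$, hence applying Terai at $F = \emptyset$ gives connectedness of $\Delta^{[i]}$ for $i \geq 1$. Moreover, for every vertex $x$ and every $j \geq 0$, the complex $(\mathrm{lk}_\Delta\{x\})^{[j]} = \mathrm{lk}_{\Delta^{[j+1]}}\{x\}$ is $S_2$ because links of faces in an $S_2$ complex are again $S_2$ (links of links are links of larger faces, so the Terai conditions are inherited). By the pure-skeleton reduction applied to $\mathrm{lk}_\Delta\{x\}$, this means $R/I_{\mathrm{lk}_\Delta\{x\}}$ is sequentially $S_2$.

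For the converse, I would induct on $\dim \Delta$. Assume both hypotheses hold. To verify that $\Delta^{[i]}$ is $S_2$, fix $F \in \Delta^{[i]}$ with $|F| \leq i-1$; I must show $\mathrm{lk}_{\Delta^{[i]}} F$ is connected. If $F = \emptyset$, this is the connectedness hypothesis. Otherwise pick $x \in F$; then by the identity above,
\[
\mathrm{lk}_{\Delta^{[i]}} F = \mathrm{lk}_{(\mathrm{lk}_\Delta\{x\})^{[i-1]}}(F \setminus \{x\}).
\]
By hypothesis $R/I_{\mathrm{lk}_\Delta\{x\}}$ is sequentially $S_2$, so $(\mathrm{lk}_\Delta\{x\})^{[i-1]}$ is $S_2$. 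Since $|F \setminus \{x\}| = |F| - 1 \leq i-2$, Terai's criterion applied to this $S_2$ complex yields connectedness of the displayed link, as needed.

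The main obstacle is justifying the first step in the $S_2$ setting: while the sequential Cohen-Macaulay characterization via pure skeletons is classical, the $S_2$ version requires checking that the subquotients in the dimension filtration of $R/I_\Delta$ carry exactly the $S_2$ information of the pure skeletons. The rest is a routine combination of Terai's criterion with the link identity, handled by induction on $\dim \Delta$, with mild care for boundary cases such as $i=0$ (discrete vertex set), empty links, and degenerate low-dimensional complexes.
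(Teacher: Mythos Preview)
The paper does not prove this statement at all; it is quoted as \cite[Theorem~2.9]{HTYZ} and used only as a black box in the proof of Theorem~\ref{seq}. So there is no in-paper argument to compare your proposal against.

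That said, your outline is sound and is essentially the strategy of the original source \cite{HTYZ}: reduce sequential $S_2$ to ordinary $S_2$ on each pure skeleton, then unpack $S_2$ via Terai's link-connectedness criterion together with the identity $\mathrm{lk}_{\Delta^{[i]}}\{x\}=(\mathrm{lk}_\Delta\{x\})^{[i-1]}$. The forward direction is immediate from these ingredients, and the converse goes by the induction you describe: the case $F=\emptyset$ is the connectedness hypothesis, and for $F\neq\emptyset$ you pass to the link of any $x\in F$ and use the hypothesis on $\mathrm{lk}_\Delta\{x\}$. The one point you flag as an obstacle---the pure-skeleton characterization of sequential $S_r$---is exactly what \cite{HTYZ} establishes first (their analogue of Duval's theorem), so your caveat is well placed but not a gap: once that is granted, the rest is routine. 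Minor edge cases (vertices $x$ not lying in any $(i{+}1)$-face, or skeletons $\Delta^{[i]}$ that are void) cause no trouble, since the corresponding Stanley-Reisner rings are zero and the conditions hold vacuously.
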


Let $\Delta$ be a $(d-1)$-dimensional simplicial complex on vertex set $V$. Then it is said to be (nonpure) \textbf{shellable} if its facets can be ordered $F_1,F_2,\ldots,F_m$ such that for all $i$ and all $j<i$, there exists $v_l\in F_i\setminus F_j$ and $k<i$ such that $F_i \setminus F_k=\{v_l\}$. If $\Delta$
is shellable, then $R/I_\Delta$ is sequentially Cohen-Macaulay (see for example \cite[Corollary~8.2.19]{HH}). \\
A simplicial complex $\Delta$ is recursively defined to be \textbf{vertex decomposable} if it is
either a simplex or else has some vertex $v$ so that \\
\indent (1) both $\Delta \setminus v$ and $\mathrm{lk}_\Delta \{v\}$ are vertex decomposable, and \\
\indent (2) no face of $\mathrm{lk}_\Delta \{v\}$ is a facet of $\Delta \setminus v$.\\
Here, by $\Delta\setminus v$, we mean the simplicial complex which is obtained from $\Delta$ by removing all faces that contain $v$ from $\Delta$. A vertex $v$ which satisfies Condition (2) is called a \textbf{shedding} vertex. Moreover, complexes $\{\}$ and $\{\emptyset\}$ are considered to be vertex decomposable. One has that every vertex decomposable simplicial complex is shellable. (see \cite[Section~2]{W1}).

Now, let $\mathcal{H}$ be a hypergraph on the vertex set $V$, and let $v\in V$. The \textbf{deletion} $\mathcal{H}\setminus v$ is the hypergraph on the vertex set $V\setminus \{v\}$ with edges $\{e~:~e~\mathrm{is~an~edge~of}~\mathcal{H}~\mathrm{with}~v\notin e\}$. The \textbf{contraction} $\mathcal{H}/v$ is the hypergraph on the vertex set $V\setminus \{v\}$ with edges the minimal sets of $\{e\setminus \{v\}~:~e~\mathrm{is~an~edge~of}~\mathcal{H}\}$, that is $\mathcal{H}/v$ removes $v$ from each edge containing it, and then removes any redundant edges. A hypergraph $\mathcal{H'}$ obtained from $\mathcal{H}$ by repeated deletion and/or contraction is called a \textbf{minor} of $\mathcal{H}$.

\begin{lem}\label{contraction-deletion}
\cite[Lemma~2.2]{W} Let $\mathcal{H}$ be a hypergraph and $v\in V(\mathcal{H})$. Then \\
{\em{(a)}} $\mathrm{Ind}(\mathcal{H}/v)=\mathrm{lk}_{\mathrm{Ind}(\mathcal{H})}\{v\}$, \\
{\em{(b)}} $\mathrm{Ind}(\mathcal{H}\setminus v)=\mathrm{Ind}(\mathcal{H})\setminus v$.
\end{lem}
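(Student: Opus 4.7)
The plan is to prove both parts directly from the definitions, with no induction or outside machinery. Essentially this lemma is a bookkeeping exercise in which one translates ``independent set of a hypergraph'' (a set containing no edge) through the operations of contraction and deletion, and matches the result with the corresponding simplicial operation on $\mathrm{Ind}(\mathcal{H})$.

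For part (b), I would first note that by construction the edges of $\mathcal{H}\setminus v$ are exactly the edges of $\mathcal{H}$ not containing $v$, and its vertex set is $V(\mathcal{H})\setminus\{v\}$. Given any $F\subseteq V(\mathcal{H})\setminus\{v\}$, no edge of $\mathcal{H}$ that contains $v$ can be a subset of $F$; therefore $F$ contains an edge of $\mathcal{H}$ iff $F$ contains an edge of $\mathcal{H}\setminus v$. Equivalently, $F$ is independent in $\mathcal{H}\setminus v$ iff $F$ is independent in $\mathcal{H}$ and $v\notin F$, which is precisely the condition $F\in \mathrm{Ind}(\mathcal{H})\setminus v$. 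Passing to the simplicial closure (maximal independent sets and their subsets) preserves this equivalence, giving (b).

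For part (a), I would unfold the link: $F\in \mathrm{lk}_{\mathrm{Ind}(\mathcal{H})}\{v\}$ means $v\notin F$ and $F\cup\{v\}\in \mathrm{Ind}(\mathcal{H})$, i.e.\ $F\cup\{v\}$ contains no edge of $\mathcal{H}$. The key translation is the elementary observation that, for $F\subseteq V(\mathcal{H})\setminus\{v\}$ and $e\in\mathcal{H}$,
\[
e\subseteq F\cup\{v\}\quad\Longleftrightarrow\quad e\setminus\{v\}\subseteq F.
\]
Hence $F\cup\{v\}$ is independent in $\mathcal{H}$ iff no set of the form $e\setminus\{v\}$ (for $e\in\mathcal{H}$) is contained in $F$. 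Since the edges of $\mathcal{H}/v$ are the minimal members of $\{e\setminus\{v\}:e\in\mathcal{H}\}$, and since a set contains some member of a family iff it contains a minimal such member, this is equivalent to $F$ containing no edge of $\mathcal{H}/v$, i.e.\ $F\in \mathrm{Ind}(\mathcal{H}/v)$.

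The only mild subtlety, and the one small place requiring attention, is the minimality clause in the definition of $\mathcal{H}/v$ together with the corner case where $\{v\}\in\mathcal{H}$: in that case $e\setminus\{v\}=\emptyset$ appears, so that $\mathcal{H}/v$ has no independent set and $\{v\}\notin \mathrm{Ind}(\mathcal{H})$, making $\mathrm{lk}_{\mathrm{Ind}(\mathcal{H})}\{v\}$ empty as well; the equivalence still holds vacuously. Apart from this, the argument is a one-line logical equivalence on either side, so no serious obstacle is expected.
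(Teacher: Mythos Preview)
Your argument is correct: both parts reduce to the set-theoretic equivalences you wrote down, and you handled the minimality clause and the corner case $\{v\}\in\mathcal{H}$ properly. Note that the paper does not actually prove this lemma; it is quoted verbatim from \cite[Lemma~2.2]{W}, so there is no in-paper proof to compare against --- your direct verification from the definitions is exactly the standard one.
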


A vertex $v$ of $\mathcal{H}$ is called \textbf{simplicial} if for every two edges $e_1$ and $e_2$ of $\mathcal{H}$ that contain $v$, there is a third edge $e_3$ such that $e_3\subseteq (e_1\cup e_2)\setminus \{v\}$. A hypergraph $\mathcal{H}$ is \textbf{chordal} if every minor of $\mathcal{H}$ has a simplicial vertex. Note that this definition coincide with the original one for simple graphs. (See \cite[Sections~2~and~4]{W} for latter definitions. Note that here we use the words \textit{hypergraph} and \textit{edge} instead of what is called \textit{clutter} and \textit{circuit}, respectively, in \cite{W}).

Recall that the \textbf{join} of two simplicial complexes $\Delta_1$ and $\Delta_2$ on disjoint vertex sets $V_1$ and $V_2$ is a simplicial complex, denoted by $\Delta_1\ast \Delta_2$, on vertex set $V_1\cup V_2$ with faces $\{F_1\cup F_2~:~F_1\in \Delta_1~,~F_2\in \Delta_2\}$. \\

{\em Proof of Theorem \ref{seq}.}
(a) $\Rightarrow$ (b) $\Rightarrow$ (c) $\Rightarrow$ (d) are well-known, as we mentioned above. \\

In the rest of the proof, assume that $|V_1|\leq \cdots \leq |V_t|$. \\

(d) $\Rightarrow$ (e) Suppose that $R/I(\mathcal{H})$ is sequentially $S_r$, so that it is sequentially $S_2$. Let $\Delta:=\mathrm{Ind}(\mathcal{H})$. Also, assume that $V_i=\{v_{i1},\ldots,v_{in_i}\}$, for all $i=1,\ldots,t$. Now, suppose on the contrary that $|V_{t-1}|\geq 2$, so that $|V_t|\geq 2$. For all $i=1,\ldots,s-2$ and all $j=1,\ldots,n_i$, we set $\Gamma_{i,j}:=\langle V_i\setminus \{v_{i1},\ldots,v_{ij}\}\rangle$, which is a simplex, and ${\mathcal{H}}_i$ to be the complete $(s-i)$-uniform $(t-i)$-partite hypergraph over the set of vertices $\bigcup_{l=i+1}^t V_l$. Now, set $\Delta_{i,j}:=\Gamma_{i,j}\ast \mathrm{Ind}(\mathcal{H}_i)$, for all $i=1,\ldots,s-2$ and all $j=1,\ldots,n_i$. One can see that $\Delta_{i,1}=\mathrm{lk}_{\Delta_{i-1,n_{i-1}}}\{v_{i1}\}$, and $\Delta_{i,j}=\mathrm{lk}_{\Delta_{i,j-1}}\{v_{ij}\}$, for all $j=2,\ldots,n_i$. Thus, for example, $\Delta_{1,1}=\Gamma_{1,1}\ast \mathrm{Ind}(\mathcal{H}_1)=\mathrm{lk}_{\Delta}\{v_{11}\}$, where $R/I_{\Delta_{1,1}}$ is sequentially $S_2$, by Theorem~\ref{seq S_2}. So, again by using Theorem~\ref{seq S_2}, we have that the Stanley-Reisner ring of $\Delta_{1,2}=\mathrm{lk}_{\Delta_{1,1}}\{v_{12}\}$ is sequentially $S_2$. Then, using repeatedly Theorem~\ref{seq S_2}, we have that the Stanley-Reisner ring of $\Delta_{i,j}$ is sequentially $S_2$, for all $i=1,\ldots,s-2$ and all $j=1,\ldots,n_i$. In particular, the Stanley-Reisner ring of $\Delta_{s-2,n_{s-2}}=\Gamma_{s-2,n_{s-2}}\ast \mathrm{Ind}(\mathcal{H}_{s-2})$ is sequentially $S_2$. Thus, $\Delta_{s-2,n_{s-2}}^{[1]}$ is a connected graph, by Theorem~\ref{seq S_2}. But, $\Gamma_{s-2,n_{s-2}}=\{\emptyset\}$, and hence $\Delta_{s-2,n_{s-2}}=\mathrm{Ind}(\mathcal{H}_{s-2})=\langle V_{s-1},\ldots,V_{t}\rangle$, by Proposition~\ref{basic properties}. Therefore, $\Delta_{s-2,n_{s-2}}^{[1]}$ has at least two connected components over two disjoint sets of vertices $V_{t-1}$ and $V_t$, because $|V_{t}|\geq |V_{t-1}|\geq 2$, a contraction. So, we have $|V_{t-1}|=1$, which implies that $|V_i|=1$, for all $i=1,\ldots,t-1$, by our assumption. \\

(e) $\Rightarrow$ (a) Suppose that $\mathcal{H}$ has $t-1$ sides consisting of exactly one vertex. Let $V:=V(\mathcal{H})$, $n:=|V|\geq 2$ and $\Delta:=\mathrm{Ind}(\mathcal{H})$. By Proposition~\ref{basic properties}, we have $\Delta=\langle \{v_{i_1},\ldots,v_{i_{s-1}}\},V_t\cup \{v_{i_1},\ldots,v_{i_{s-2}}\}~:~1\leq i_1<\cdots <i_{s-1}\leq t-1 \rangle$. Without loss of generality, we assume that $V_{j}:=\{v_j\}$, for all $j=1,\ldots,t-1$, and let $V_{t}:=\{v_t,\ldots,v_n\}$. If $s=t$, then $\mathcal{H}\setminus v_1$ does not have any edges, and hence $\mathrm{Ind}(\mathcal{H}\setminus v_1)$ is a simplex on $V\setminus \{v_1\}$, which is vertex decomposable. If $s<t$, then one can see that $\mathcal{H}\setminus v_1$ is a complete $s$-uniform $(t-1)$-partite hypergraph over $V\setminus \{v_1\}$. Thus, $\mathrm{Ind}(\mathcal{H}\setminus v_1)=\langle \{v_{i_1},\ldots,v_{i_{s-1}}\},V_t\cup \{v_{i_1},\ldots,v_{i_{s-2}}\}~:~1<i_1<\cdots <i_{s-1}\leq t-1 \rangle$. Also, one can see that $\mathcal{H}/ v_1$ is a complete $(s-1)$-uniform $(t-1)$-partite hypergraph over $V\setminus \{v_1\}$. Thus, $\mathrm{Ind}(\mathcal{H}/ v_1)=\langle \{v_{i_1},\ldots,v_{i_{s-2}}\},V_t\cup \{v_{i_1},\ldots,v_{i_{s-3}}\}~:~1<i_1<\cdots <i_{s-2}\leq t-1 \rangle$. So, by Lemma~\ref{contraction-deletion}, no face of $\mathrm{lk}_{\Delta}\{v_1\}$ is a facet of $\Delta\setminus v_1$. So, $v_1$ is a shedding vertex. Now, the result follows by induction on $|V(\mathcal{H})|\geq 2$. \\

(e) $\Rightarrow$ (f) Suppose that $\mathcal{H}$ has $t-1$ sides consisting of exactly one vertex. If $|V_t|=1$, then $\mathcal{H}$ is an $s$-uniform complete hypergraph and hence chordal, by \cite[Example~4.4]{W}. Let $|V_t|>1$. If $v\in V_t$, then we show that $v$ is a simplicial vertex of $\mathcal{H}$. Let $e_1=\{v_{i_1},\ldots,v_{i_{s-1}},v\}$ and $e_2=\{v_{j_1},\ldots,v_{j_{s-1}},v\}$ be two distinct edges of $\mathcal{H}$, where $v_{i_1},\ldots,v_{i_{s-1}}$ and $v_{j_1},\ldots,v_{j_{s-1}}$ are contained in sides of cardinality $1$. Then, there exists $j_t$ such that $1\leq t\leq s-1$ and $v_{j_t}\notin \{v_{i_1},\ldots,v_{i_{s-1}}\}$. Thus, $e_3=\{v_{j_t},v_{i_1},\ldots,v_{i_{s-1}}\}$ is an edge of $\mathcal{H}$, since $v_{j_t}$ is not in the $s-1$ other sides containing the vertices $v_{i_1},\ldots,v_{i_{s-1}}$. So, $e_3\subseteq (e_1\cup e_2)\setminus \{v\}$, and hence by the definition, $v$ is a simplicial vertex of $\mathcal{H}$. Now, we show that all minors of $\mathcal{H}$ have a simplicial vertex. If $v\in V_i$, for some $i=1,\ldots,t-1$, then similar to the proof of (e) $\Rightarrow$ (a), $\mathcal{H}\setminus v$ has no edges or is a complete $s$-uniform $(t-1)$-partite hypergraph over $V\setminus \{v\}$, and $\mathcal{H}/ v$ is a complete $(s-1)$-uniform $(t-1)$-partite hypergraph over $V\setminus \{v\}$. If $v\in V_t$, then one can see that $\mathcal{H}\setminus v$ is a complete $s$-uniform $t$-partite hypergraph over $V\setminus \{v\}$, and $\mathcal{H}/ v$ is an $(s-1)$-uniform complete hypergraph over $V\setminus V_t$. Thus, in all these cases, the obtained hypergraph has a simplicial vertex, as mentioned above for $\mathcal{H}$. Thus, by a similar discussion, all minors of $\mathcal{H}$ have a simplicial vertex, so that $\mathcal{H}$ is chordal. \\

(f) $\Rightarrow$ (b) follows by \cite[Corollary 5.4]{W}.
~~~~~~~~~$~~~\Box$ \\

Let $G$ and $H$ be two graphs. We denote by $G*H$, the join of two graphs $G$ and $H$, that is the graph with vertex set $V(G)\cup V(H)$, and the edge set $E(G)\cup E(H)\cup \{\{v,w\}~:~v\in V(G),~w\in V(H)\}$. Moreover, we denote the complete graph on $n$ vertices, by $K_n$, and the complementary of a graph $G$, by $G^c$.

\begin{cor}\label{seq-graph}
Let $G$ be a complete $t$-partite graph. Then $R/I(G)$ is sequentially Cohen-Macaulay if and only if $G$ is $K_{t-1}*K_m^c$, for some $m\geq 1$.
\end{cor}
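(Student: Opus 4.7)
The plan is simply to specialize Theorem~\ref{seq} to the case $s=2$ and then translate the resulting combinatorial condition on the partition of $G$ into the join description $K_{t-1}\ast K_m^c$.

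Since $G$ is a complete $t$-partite graph, it is a complete $s$-uniform $t$-partite hypergraph with $s=2$, so Theorem~\ref{seq} applies. In particular, the equivalence of conditions (c) and (e) there states that $R/I(G)$ is sequentially Cohen-Macaulay if and only if $t-1$ of the sides $V_1,\ldots,V_t$ of $G$ consist of a single vertex. Hence the proof reduces to verifying that the latter combinatorial condition on $G$ is precisely the description $G\iso K_{t-1}\ast K_m^c$.

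For the forward direction, assume (after relabelling) that $|V_1|=\cdots=|V_{t-1}|=1$ and $|V_t|=m\geq 1$. Denote by $u_1,\ldots,u_{t-1}$ the singleton vertices and by $V_t=\{w_1,\ldots,w_m\}$ the remaining side. Because $G$ is complete multipartite, vertices in distinct parts are adjacent; in particular, the vertices $u_1,\ldots,u_{t-1}$ are pairwise adjacent, giving a copy of $K_{t-1}$, while no two vertices of $V_t$ are adjacent, giving $K_m^c$. Finally, every $u_i$ is adjacent to every $w_j$, so $G$ is exactly the join $K_{t-1}\ast K_m^c$.

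Conversely, suppose $G=K_{t-1}\ast K_m^c$ with $m\geq 1$. The $t-1$ mutually adjacent vertices of the $K_{t-1}$ factor must lie in $t-1$ distinct sides of any multipartite partition of $G$, so each of those sides has cardinality $1$; the remaining $m$ vertices form an independent set adjacent to all of $K_{t-1}$, hence they constitute the last side $V_t$ of size $m$. Thus $G$ has $t-1$ sides consisting of a single vertex, completing the equivalence and proving the corollary.
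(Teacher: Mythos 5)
Your proposal is correct and matches the paper's intended argument: the corollary is deduced by specializing the equivalence of (c) and (e) in Theorem~\ref{seq} to $s=2$ and observing that a complete $t$-partite graph has $t-1$ singleton sides exactly when it is $K_{t-1}\ast K_m^c$. Your explicit verification of that last translation is a harmless elaboration of what the paper leaves implicit.
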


The equivalency of parts (a), (b) and (e) of Theorem~\ref{seq} generalizes the results of \cite{SRS} for multipartite graphs. \\

\textbf{Acknowledgments:} We would like to thank Russ Woodroofe who motivated us to look at the vertex decomposability and chordalness in Theorem~\ref{seq}. We would also like to thank the referee for his or her useful and valuable comments.  Moreover, the authors would like to thank to the Institute for Research in Fundamental Sciences (IPM) for financial support. The research of the first author was in part supported by a grant from IPM (No. 93050220).

\providecommand{\byame}{\leavevmode\hbox
to3em{\hrulefill}\thinspace}


\begin{thebibliography}{10}


\bibitem{Be}  C. Berge, {\em Hypergraphs.} Elsevier Science Publishers B. V., (1989).


\bibitem{ER}  J. A. Eagon and V. Reiner, {\em Resolutions of Stanley-Reisner rings and Alexander
duality.} J. of Pure and Appl. Algebra 130, (1998), 265-275.



\bibitem{HTYZ} H. Haghighi, N. Terai, S. Yassemi and R. Zaare-Nahandi, {\em Sequentially $S_r$ simplicial complexes and sequentially $S_2$ graphs.}
Proc. Amer. Math. Soc. 139 (2011), No. 6, 1993-2005.


\bibitem{H}  R. Hartshorne, {\em Complete intersection and  connectedness.} Amer. J. Math. 84 (1962), 497-508.


\bibitem{HH}  J. Herzog and T. Hibi, {\em Monomial ideals.} Springer, (2010).


\bibitem{MT} N. C. Minh and N. V. Trung, {\em Cohen-Macaulayness of monomial ideals and symbolic powers of Stanley-Reisner ideals.} Adv. Math. 226 (2) (2011), 1285-1306.


\bibitem{MT1} N. C. Minh and N. V. Trung, {\em Corrigendum to ``Cohen-Macaulayness of monomial ideals and symbolic powers of Stanley-Reisner ideals" [Adv. Math. 226 (2) (2011), 1285-1306].} Adv. Math. 228 (2011), 2982-2983.


\bibitem{M1} M. Miyazaki, {\em On 2-Buchsbaum complexes.} J. Math. Kyoto Univ. 30 (1990), 367-392.


\bibitem{M} M. Miyazaki, {\em Characterizations of Buchsbaum Complexes.} Manuscripta Mathematica, (1989), 245-254.



\bibitem{MKM} F. Mohammadi, D. Kiani and S. Moradi, {\em On the Betti numbers of the edge ideals of some hypergraphs.} Beitr\"{a}ge zur Algebra und Geometrie, 51 (1989), No. 2, 467-475.


\bibitem{MK} S. Moradi and D. Kiani, {\em Bounds for the regularity of edge ideals of vertex decomposable and
shellable graphs.} Bull. Iranian Math. Soc. 36 (2010), no. 2, 267-277.


\bibitem{SKT} S. Saeedi Madani, D. Kiani and Naoki Terai, {\em Sequentially Cohen-Macaulay path ideals of cycles.} Bull. Math. Soc. Sci. Math. Roumanie, Tome 54(102) No. 4, 2011, 353-363.


\bibitem{SRS} S. M. Seyyedi, F. Rahmati and M. Saeedi, {\em Shellable and Cohen-Macaulay complete $t$-partite graphs.} Preprint.



\bibitem{S} R. Stanley, {\em Combinatorics and Commutative Algebra.} Second Edition, Birkhauser, Boston, (1995).


\bibitem{T} N. Terai, {\em Alexander duality in Stanley-Reisner rings.} in ``Affine Algebraic Geometry (T. Hibi,
ed.)", Osaka University Press, Osaka, (2007), 449-462.



\bibitem{W1} R. Wooderoofe, {\em Vertex decomposable graphs and obstructions to shellability.} Proc. Amer. Math. Soc. 137 (2009), 3235-3246.


\bibitem{W} R. Wooderoofe, {\em Chordal and sequentially Cohen-Macaulay clutters.} The Electronic Journal of Combinatorics, 18 (2011), $\sharp$P208.



\end{thebibliography}
\end{document}